\numberwithin{equation}{section}
\theoremstyle{theorem} 
\newtheorem{theorem}{Theorem}[section]
\newtheorem{cor}[theorem]{Corollary}
\newtheorem{lem}[theorem]{Lemma}
\newtheorem{prop}[theorem]{Proposition}
\newtheorem{question}[theorem]{Question}
\theoremstyle{definition}
\newtheorem{defn}[theorem]{Definition}
\theoremstyle{remark} 
\newtheorem{rem}[theorem]{Remark}
\newcommand{\bb}[1]{\mathbb{#1}}
\renewcommand{\rm}[1]{\mathrm{#1}}
\renewcommand{\cal}[1]{\mathcal{#1}}
\newcommand{\mM}{\mathcal{M}}
\newcommand{\nN}{\mathcal{N}}
\newcommand{\pP}{\mathcal{P}}
\newcommand{\cC}{\mathcal{C}}
\newcommand{\dD}{\mathcal{D}}
\newcommand{\qQ}{\mathcal{Q}}
\newcommand{\tT}{\mathcal{T}}
\newcommand{\Hom}{\mathrm{Hom}}
\newcommand{\uHom}{\underline{\mathrm{Hom}}}
\newcommand{\Mod}{\mathrm{Mod}}
\newcommand{\coMod}{\mathrm{coMod}}
\newcommand{\bp}{\otimes_k}
\title{Tannakian formalism for fiber functors over tensor categories}
\author {Amir Jafari \\ Mostafa Einollahzadeh}
\begin{document}
\maketitle
\begin{abstract}
In this paper we generalize Tannakian formalism to fiber functors over general tensor categories. We will show that (under some technical conditions) if the fiber functor has a section, then the source category is equivalent to the category of comodules over a Hopf algebra in the target category. We will also give a description of this Hopf algebra using the notion of framed objects.
\end{abstract}

\tableofcontents 
\section{Introduction}
The theory of Tannakian categories (developed by Saavedra \cite{S}, Deligne \cite{DM}, \cite{D} and others) gives a description of tensor categories $\cC$ (over a field $k$), equipped with a fiber functor $F$ (i.e. an exact faithful $k$-linear tensor functor) to the category $\mathrm{Vect}_k$ of finite dimensional vector spaces over $k$. These categories are called ``neutral Tannakian categories" and this theory gives a tensor equivalence between $\cC$ and the category $\coMod(A)$ of finite dimensional (over $k$) comodules over the Hopf algebra $A=\mathrm{End}(F)^*$. In this paper we consider a more general case of fiber functors $F:\cC\to \dD$ between two general tensor categories over $k$, and we will give simple criterions for having a Hopf algebra $A$ in $\dD$, such that $F$ becomes equivalent to the forgetful functor from the category $\coMod(A)$ of comodules over $A$ in $\dD$  to $\dD$.

For a Hopf algebra $A$ in $\dD$, the category $\coMod(A)$ contains a full subcategory of trivial comodules which is equivalent under the forgetful functor to $\dD$. This gives a trivial condition for the existence of $A$: that is the existence of a ``section" for $F$, i.e. a tensor functor $s:\dD\to \cC$ with a tensor equivalence $F\circ s \simeq id_\dD$. In fact we will see that under some other technical conditions, the existence of a section is also a sufficient condition for the existence of $A$. But an important point is that $A$ is not simply the dual of $\underline{\rm{End}}(F)$ (inner endomorphism object of $F$ in $\dD$), but  it is a quotient of this dual object  which depends on the given section $s$. For doing this, we present two approaches: 

In the first approach, the fiber functor takes values instead of $\dD$, in the category of finitely generated $S$-modules in a semisimple tensor category $\pP$, where $S$ is a commutative ring in $\pP$, which replaces $Vect_k$ in the classical theory. The main idea comes from a simple observation that in the above context, $s$ defines an action of $\dD$ on $\cC$ via $(D,C)\mapsto s(D)\otimes C$  for $D\in \dD, C\in \cC$. This action defines a $\dD$-module category structure on $\cC$ and $F$ becomes a $\dD$-module functor from $\cC$ to $\dD$ (or $\Mod(S)$ for a commutative algebra $S$ in $\dD$). Focusing on this structure, we start in section \ref{mod} by some fundamental definitions and results on module categories over a semisimple rigid monoidal category $\pP$. In the next section we consider $\pP$-module functors $\omega$ from a $\pP$-module category $\mM$ to the category of dualizable modules over an algebra $S$ in $\pP$. Here we associate a coalgebroid $L(\omega)$ over $S$ to such functors ({which is the  dual of the $\pP$-module inner endomorphism object of $\omega$}) and in  subsection \ref{exact-faithful-functors} we show that if $\omega$ is exact and faithful, then (under some extra conditions) $\mM$ is equivalent to the category of finitely generated  $S$-modules with a comodule structure  over $L(\omega)$. Now if we add a symmetric monoidal structure to $\mM$ and assume that $\omega$ is also symmetric monoidal (with respect to tensor product of modules over a commutative algebra $S$ in a symmetric monoidal category $\pP$), this induces  a Hopf algebroid structure over $S$ on $L(\omega)$ and finally in Theorem \ref{main} we show that $\mM$ is equivalent (as a monoidal and $\pP$-module category) to $\coMod(L(\omega))$.

In our second approach (section \ref{fund-grp}), we consider general tensor categories $\dD$ and fiber functors $F:\cC\to \dD$. Here we use Deligne's results in \cite{D} on the fundamental groups of tensor categories, and finally in the Corollary \ref{fiber-tensor}, we show that (under some technical conditions) if there is a section $s:\dD\to \cC$, then $\cC$ is equivalent to the category $\rm{Rep}_\dD(G)$ of representations in $\dD$ of an ``affine group scheme" $G$ over $\dD$ (or a $\dD$-group) and $F$ corresponds under this equivalence to the forgetful functor $\rm{Rep}_\dD(G)\to \dD$. $G$ itself can be described as ``$k$-tensor automorphisms" of $F$  which act as identity on $F(s(X))$ for every $X\in \dD$. This shows the dependence of $G$ on $s$, and in fact there are simple examples (Remark \ref{grp-sec}) where two different sections for $F$ give two nonisomorphic $\dD$-groups. The Hopf algebra which corresponds to $G$ is the same as the Hopf algebra associated to $F$ as a $\dD$-module functor in previous sections.

Fiber functors to tensor categories (besides the category of vector spaces) arise naturally in several places in algebraic geometry. Here we give two typical examples:
\begin{itemize}
\item Let $X$ be an smooth variety over $\mathbb{C}$ and $\cC$ be the category of admissible (or unipotent, etc.) variations of mixed (or pure) Hodge structures on $X$ and $F$ be the fiber functor to the category $\dD$ of variations of the same kind over the point (i.e. mixed or pure Hodge structures), which is defined on  $\cC$ by the restriction functor to a fixed point $x\in X$. Then constant variations define a natural section for $F$ and so we have a tensor equivalence between $\cC$ and the category of representations of a  $\dD$-group in $\dD$. 

In the second section of \cite{A}, D. Arapura develops a general theory of an  ``enriched local systems" $E$ over smooth complex varieties generalizing above examples. He associates to every smooth variety $X$ and a point $x\in X$, a ``$E$-fundamental group" $\pi_1^E(X,x)$ which is a $E(pt)$-group. Then he shows that there is an equivalence between $E(X)$ (enriched local systems over $X$) and $\rm{Rep}_{E(pt)}(\pi_1^E(X,x))$. Arapura's construction of the fundamental group coincides with our construction of the associated $\dD$-group in Corollary \ref{fiber-tensor}.
\item
Let $\mM$ be some nice ``mixed category" (such as mixed Hodge structures, (hypothetical) mixed motives, mixed Tate motives or (hypothetical) mixed elliptic motives, etc.) and $F$ be the fiber functor defined by the sum of graded quotients to the semisimple tensor subcategory $\pP$ of  pure objects in $\mM$. In this case, the containment of $\pP$ in $\mM$ is an evident section for $F$ and our results give a description of $\mM$ as the category of $\pP$-objects with a comodule structure over a Hopf algebra in $\pP$. 

In section 3 of \cite{G}, A. Goncharov considers a general ``mixed category" $\mM_\pP$ over a semisimple tensor category $\pP$ (this means that  $\mM_\pP$ is a tensor category containing $\pP$, with a  ``nice" filtration $W$ on every object such that the graded quotients belong to $\pP$). Then he defines the functor:
$$\Psi: \mM_\pP\to \pP, \quad M\mapsto \oplus_{n\in \mathbb{Z}} gr_n^W M,$$
and $H(\mM_\pP):= \underline{\rm{End}}(\Psi)$. He claims that $\mM_\pP$ is equivalent to category of $H(\mM_\pP)$-modules in $\pP$ and then he gives a description of the dual of $H(\mM_\pP)$ by using  ``framed objects".\footnote{The notion of framed objects  appeared first in \cite{BGSV}.} (theorem 3.3 in \cite{G}, the object defined by framed objects is  denoted by $H'(\mM_\pP)$) As we mentioned above, $H(\mM_\pP)$ is not the right Hopf algebra to consider, (it can be easily seen in the trivial case $\mM_\pP=\pP$ which $H(\mM_\pP)$ is a nontrivial Hopf algebra in general) but a certain subalgebra of it is the appropriate Hopf algebra, which we call it the dual of the ``associated coalgebra". 

We will generalize   Goncharov's notion of framed objects in section \ref{fiber-sec} to general module functors, and in Proposition \ref{framed-coalg}, we will show that this gives a similar formula for the description of the associated coalgebras (or coalgebroids in general). So the Hopf algebra $H'(\mM_\pP)$ is the right object to consider and Goncharov's mistake in proving theorem 3.3 is essentially  only to consider that ``$\mM_\pP$ is equivalent to the category of $\underline{\rm{End}}(\Psi)$-modules" as a consequence of Tannakian formalism for fiber functors over semisimple categories.
\end{itemize}

It is also worth to mention that there are some connections between the results in this paper and Day's work in \cite{Day} on the reconstruction of a coalgebra or a Hopf algebra from its category of comdules and the fiber functor which forgets the comodule structure. But our main problem as discussed above is different, because our main interest is to find sufficient conditions for a category (with a fiber functor and extra structures) to be equivalent to a category of comdules for some coalgebra (or Hopf algebra) and finding formulas for the corresponding coalgebra in such cases. Another difference is that in working with the general case of fiber functors over a general monoidal category, we used the notions of module categories and functors rather than the notions of enriched categories and functors. This mostly due to the fact that (at least in the cases we studied) the module conditions are more restrictive than the enriched conditions, which we think are necessary for our purposes. For a more detailed discussion, see Remark \ref{module-enriched}.

\section{Module categories}\label{mod}

Throughout this paper $k$ is an arbitrary field, and all  categories and functors are $k$-linear additive and all categories are essentially small. For two such categories $\cal{C}$ and $\cal{D}$, $\cal{C}\otimes \cal{D}$ denotes the category whose objects are pairs of objects in $\cal{C}$ and $\cal{D}$ and the set of  morphisms between $(X,Y)$ and $(X',Y')$ is $\Hom_\cal{C}(X,X')\otimes_k \Hom_\cal{D}(Y,Y')$. 

Gabber's theorem on the characterization of locally finite and finitely generated $k$-linear categories as a category of modules over an algebra (cf. \cite{D}, 2.14), can be viewed as a result about module categories over the category $\rm{Vect}_k$ of finite dimensional vector spaces over $k$. In this section we will try to give a generalized version of this theorem for module categories over a general $k$-linear semisimple abelian rigid monoidal category. 
In the following basic definitions, we follow two main references \cite{O} and \cite{E} with minor changes for restricting to the case of $k$-linear categories.

\begin{defn} A {\em monoidal category} consists of
the following data: a category $\cal{C}$, a functor $\otimes : \cal{C} \otimes \cal{C} \to \cal{C}$,
functorial isomorphisms $a_{X,Y,Z}: (X\otimes Y)\otimes Z\to X\otimes
(Y\otimes Z)$, a unit object ${\bf 1} \in \cal{C}$, functorial isomorphisms $r_X: 
X\otimes {\bf 1} \to X$ and $l_X: {\bf 1} \otimes X\to X$ subject to the following 
axioms:

1) Pentagon axiom: the diagram
$$
\xymatrix{&((X\otimes Y)\otimes Z)\otimes T \ar[dl]_{a_{X,Y,Z}\otimes id} 
\ar[dr]^{a_{X\otimes Y,Z,T}}&\\
(X\otimes (Y\otimes Z))\otimes T \ar[d]^{a_{X,Y\otimes Z,T}}&&(X\otimes Y)
\otimes (Z\otimes T) \ar[d]_{a_{X,Y,Z\otimes T}}\\X\otimes ((Y\otimes Z)
\otimes T) \ar[rr]^{id\otimes a_{Y,Z,T}}&&X\otimes (Y\otimes (Z\otimes T))}
$$
commutes.

2) Triangle axiom: the diagram
$$
\xymatrix{(X\otimes {\bf 1} )\otimes Y\ar[rr]^{a_{X,{\bf 1} ,Y}} \ar[dr]^{r_X\otimes 
id}&&X\otimes ({\bf 1} \otimes Y)
\ar[dl]_{id\otimes l_Y}\\ &X\otimes Y&}
$$
commutes.
\end{defn}

\begin{defn} (i) Let $\cal{C}$ be a monoidal category
and $X$ be an object in $\cal{C}$. A {\em right dual} to $X$ is an object $X^*$
with two morphisms
$$
e_X: X^*\otimes X\to {\bf 1},\; i_X: {\bf 1} \to X\otimes X^*
$$
such that the compositions
$$ X\stackrel{i_X\otimes id}{\longrightarrow}X\otimes X^*\otimes X
\stackrel{id \otimes e_X}{\longrightarrow}X$$
$$ X^*\stackrel{id\otimes i_X}{\longrightarrow}X^*\otimes X\otimes X^*
\stackrel{e_X \otimes id}{\longrightarrow}X^*$$
are equal to the identity morphisms.

(ii) A {\em left dual} to $X$ is an object $^*X$ with two morphisms
$$
e_X': X\otimes {}^*X\to {\bf 1},\; i_X': {\bf 1} \to {}^*X\otimes X
$$
such that the compositions
$$ X\stackrel{id \otimes i_X'}{\longrightarrow}X\otimes {}^*X\otimes X
\stackrel{e_X'\otimes id}{\longrightarrow}X$$
$$ ^*X\stackrel{ i_X'\otimes id}{\longrightarrow}{}^*X\otimes X\otimes {}^*X
\stackrel{id \otimes e_X' }{\longrightarrow}{}^*X$$
are equal to the identity morphisms.

(iii) A monoidal category $\cal{C}$ is called {\em rigid} if every object in 
$\cal{C}$ has right and left duals.
\end{defn}

\begin{defn} A {\em module category} over a monoidal category $\cal{C}$ is
a category $\cal{M}$ together with a functor $\otimes: \cal{C} \otimes \cal{M} \to 
\cal{M}$ and functorial associativity and unit isomorphisms $m_{X,Y,M}:
(X\otimes Y)\otimes M
\to X\otimes (Y\otimes M),\; l_M: {\bf 1} \otimes M\to M$ for any $X, Y\in \cal{C},\;
M\in \cal{M}$ such that the diagrams
$$
\xymatrix{&((X\otimes Y)\otimes Z)\otimes M \ar[dl]_{a_{X,Y,Z}\otimes id} 
\ar[dr]^{m_{X\otimes Y,Z,M}}&\\
(X\otimes (Y\otimes Z))\otimes M \ar[d]^{m_{X,Y\otimes Z,M}}&&(X\otimes Y)
\otimes (Z\otimes M) \ar[d]_{m_{X,Y,Z\otimes M}}\\X\otimes ((Y\otimes Z)
\otimes M) \ar[rr]^{id\otimes m_{Y,Z,M}}&&X\otimes (Y\otimes (Z\otimes M))}
$$
and
$$
\xymatrix{(X\otimes {\bf 1} )\otimes M\ar[rr]^{m_{X,{\bf 1} ,Y}} \ar[dr]^{r_X\otimes 
id}&&X\otimes ({\bf 1} \otimes M)
\ar[dl]_{id\otimes l_M}\\ &X\otimes M&}
$$
commute.
\end{defn}

\begin{rem}
(i) For any objects $M_1,M_2$ in a $\cal{C}$-module category $\mM$ and $X\in \cal{C}$ there are natural equivalences:
$$\Hom(X\otimes M_1,M_2) \simeq \Hom(M_1,{}^*X\otimes M_2),$$
$$\Hom(M_1, X\otimes M_2) \simeq \Hom(X^*\otimes M_1,M_2).$$

(ii) $X\otimes -$ is an exact functor $\mM\to \mM$.
\end{rem}

\begin{defn} (i) Let $\cal{M}_1$ and $\cal{M}_2$ be two module categories over
a monoidal category $\cal{C}$. A {\em module functor} from $\cal{M}_1$ to $\cal{M}_2$
is a functor $F: \cal{M}_1\to \cal{M}_2$ together with functorial isomorphisms
$c_{X,M}: F(X\otimes M)\to X\otimes F(M)$ for any $X\in \cal{C},\; M\in \cal{M}_1$
such that the diagrams
$$
\xymatrix{&F((X\otimes Y)\otimes M)\ar[dl]_{Fm_{X,Y,M}} \ar[dr]^{c_{X\otimes 
Y}}&\\ F(X\otimes (Y\otimes M))\ar[d]^{c_{X,Y\otimes M}}&&(X\otimes Y)
\otimes F(M)\ar[d]_{m_{X,Y,F(M)}}\\X\otimes F(Y\otimes M)\ar[rr]^{id\otimes 
c_{Y,M}}&&X\otimes (Y\otimes F(M))}
$$
and
$$
\xymatrix{F({\bf 1} \otimes M)\ar[rr]^{Fl_M} \ar[dr]^{c_{{\bf 1},M}}&&F(M)\\ 
&{\bf 1} \otimes F(M)\ar[ur]^{l_{F(M)}}&}
$$
are commutative.

(ii) Two module categories $\cal{M}_1$ and $\cal{M}_2$ over $\cal{C}$ are {\em equivalent}
if there exists a  module functor from $\cal{M}_1$ to $\cal{M}_2$ which is an
equivalence of categories.

(iii) A morphism of $\cal{C}$-module functors from $(F,c)$ to $(G,d):\mM_1\to\mM_2$ is a natural transformation $\nu$ between $F$ and $G$ such that the following diagram commutes for any $X \in \cal{C}$ and $M\in \mM_1$:
$$\xymatrix{
F(X\otimes M) \ar[r]^{c_{X,M}}\ar[d]_{\nu_{X\otimes M}} & X\otimes F(M)\ar[d]^{id\otimes \nu_M} \\ 
G(X\otimes M) \ar[r]^{d_{X,M}} & X\otimes G(M).}
$$
We denote the set of $\cal{C}$-module morphisms between $F$ and $G$ by $\Hom_\cal{C}(F,G)$.

\end{defn}

Throughout this paper $\cal{P}$ is a $k$-linear semisimple abelian rigid monoidal category with finite dimensional Hom-spaces.

\begin{defn}
Let
$\cal{M}$ be a module category over $\cal{P}$ and 
$M_1$ and $M_2$ be two objects of $\cal{M}$. Their \emph{internal hom} is an ind-object $\underline{Hom}(M_1,M_2)$ of $\cal{P}$ representing functor $X\mapsto \rm{Hom}(X\otimes M_1,M_2)$. 
\end{defn}

\begin{defn}
{An abelian} module category $\mM$ over $\pP$ is called \emph{locally finite over $\pP$} if 
\begin{itemize}
\item[(i)] The internal hom's of every two object in $\mM$ are objects in $\pP$, and
\item[(ii)] All objects have finite length.
\end{itemize}
\end{defn}

If $\mM$ be a locally finite module category over $\pP$, then for every two objects $X,Y\in \mM$, we have $\Hom_\mM(X,Y) \simeq \Hom_\pP({\bf 1}, \uHom(X,Y))$ and so $\Hom_\mM(X,Y)$ is finite dimensional over $k$. Here we used the term ``locally finite over $\pP$" to indicate that it is a generalization of the notion of locally finiteness for $k$-linear categories (cf. \cite{E}, 1.8) which are module categories over $\rm{Vect}_k$.

\begin{rem}\label{module-enriched}
If all of the internal hom's of a  $\pP$-module category $\mM$ belong to $\pP$, then  these internal hom's give a structure of an enriched category over $\pP$ on $\mM$. In this case the $\pP$-module category $\mM$ is usually called a \emph{tensored $\pP$-category}, (cf. \cite{B}, 6.5) which can also be defined starting from an enriched category $\mM$ over $\pP$ and assuming the existence of a tensor product $P\otimes M$ for every $P\in \pP$ and $M\in \mM$ together with natural isomorphisms:
$$\Hom_\mM(P\otimes M, -) \simeq \Hom_\pP(P,\uHom(M,-)).$$
The module categories we consider are usually tensored and hence enriched categories, but we focus on the module notion because our  main Theorems 
\ref{fiber} and \ref{main} are valid for module functors and not for every enriched functor. 

In fact every module functor between tensored categories is an enriched functor but the converse is not true. For a very simple example, consider $\mM=\rm{Vect}_k$ and $\pP=\rm{Vect}_k^\bullet$ (the category of finite direct sums of finite dimensional vector spaces graded by integers) with their standard monoidal structure. Then $\mM$ (considered as the category of graded vector spaces concentrated in grade 0) is a monoidal subcategory of $\pP$ and so is evidently enriched over $\pP$ and  this induces a module structure over $\pP$ for which the tensor product of $V^\bullet=\oplus_n V^n \in \pP$  and  $W\in \mM$ is given by $V^0\otimes W\in \mM$. Now it can be easily checked that the inclusion functor $\omega:\mM \hookrightarrow \pP$ is an enriched functor and does not satisfy the module condition. It is also interesting that $\mM,\pP$ and $\omega$ satisfy all of the conditions of Theorem \ref{fiber} except the module condition on $\omega$. Then one can show easily that $\mM$ is not equivalent to a category of comdules over any coalgebra over $\pP$, which shows the necessity of the module condition on $\omega$.

\end{rem}

\begin{defn}
{An abelian} module category $\cal{M}$ over $\cal{P}$ is \emph{finitely generated} over $\cal{P}$ if there exists an object $M$ in $\cal{M}$ such that every object of $\cal{M}$ is a subquotient of $P\otimes M$ for some object $P$ in $\cal{P}$. 
\end{defn}

\begin{defn}
A module category $\cal{M}$ over $\cal{P}$ is said to have \emph{Chevalley property} if for every two simple objects $P$ in $\cal{P}$ and $M$ in $\cal{M}$, $P\otimes M$ is semisimple. (This is a natural  extension of the definition of Chevalley property for tensor categories, cf. \cite{E}, 4.12)
\end{defn}

\begin{rem}\label{Chevalley}
Chevalley's theorem for the tensor product of semisimple representations of algebraic groups in characteristic 0,(cf. \cite{E}, Theorem 4.12.1) implies that if $\rm{char}(k)=0$, the tensor product of any two semisimple objects in a neutral Tannakian categery over $k$ is also semisimple. Now for every Tannakian category $\tT$ over $k$,\footnote{For definition of Tannakian categories, see section \ref{fund-grp}.} the category $\tT_{\bar{k}}$ constructed from $\tT$ by the extension of scalars to an algebraic closure $\bar{k}$ of $k$, is neutral Tannakian over $\bar{k}$.\footnote{For definition and basic properties of extension of scalars in Tannakian categories, see \cite{D2}, \S 4.} So $\tT_{\bar{k}}$ has the Chevalley property and by Corollary 4.11 in \cite{D2}, $\tT$ also has this property.

Now suppose $\mM$ be a Tannakian category over a field $k$ with characteristic 0, and $\pP$ be a tensor subcategory of the subcategory of semisimple objects in $\mM$. Then $\pP$ has an action defined by the tensor product of $\mM$, on $\mM$ and $\mM$ has the Chevalley property as a module category over $\pP$. 
\end{rem}
\begin{defn}
Let $R$ be an algebra in a monoidal category $\pP$ (it means that $R$ is an ind-object equipped with an associative product and a unit, when $R$ is an object of $\pP$ we say that $R$ is \emph{finite}) and $M$ be a right $R$-module in $\pP$. Then $M$ is called \emph{finitely generated} $R$-module if there is an epimorphism of right $R$-modules, $A\otimes R\to M$ for some $A\in \pP$. The category of finitely generated $R$-modules is denoted by $\rm{Mod}(R)$ and is a module category over $\pP$.
\end{defn}
\begin{rem}
i) A module over a finite algebra is finitely generated if and only if it is an object in $\pP$.

ii) The category of all right $R$-modules over an algebra $R$ is equivalent to the ind-category of the category of  finitely generated $R$-modules.
\end{rem}

The following theorem is a generalization of Gabber's theorem (\cite{D}, 2.14). An  special case of this theorem is proved in Ostrik's paper\cite{O} when $\cal{M}$ is semisimple and $\cal{P}$ has finitely many equivalence classes of simple objects.

\begin{theorem}\label{gGabber}
Let $\cal{M}$ be {an abelian} locally finite module category over $\cal{P}$. Suppose further that $\cal{M}$ is finitely generated over $\cal{P}$ and has the Chevalley property. Then $\cal{M}$ is equivalent to the category of finitely generated right modules over a finite algebra $R$ in $\cal{P}$.
\end{theorem}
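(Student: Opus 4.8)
The plan is to choose a suitable generator $M\in\mM$, form its internal endomorphism algebra $R:=\uHom(M,M)$, and prove that the functor $F:=\uHom(M,-)$ is an equivalence $\mM\xrightarrow{\sim}\rm{Mod}(R)$. Since $\mM$ is locally finite, $R$ is an object of $\pP$, i.e. a finite algebra, and composition of internal homs makes it associative and unital; for every $N\in\mM$ the object $\uHom(M,N)$ is a right $R$-module, so $F$ is a functor to right $R$-modules. The first routine point is the identity $\uHom(M,P\otimes N)\cong P\otimes\uHom(M,N)$ for $P\in\pP$, which follows from the rigidity of $\pP$ and the adjunctions of Remark (i): the functor $X\mapsto\Hom(X\otimes M,P\otimes N)$ is naturally isomorphic to $X\mapsto\Hom_\pP(X,P\otimes\uHom(M,N))$, so the latter object represents it. In particular $F(P\otimes M)\cong P\otimes R$, so $F$ sends the ``free'' objects $P\otimes M$ to the free modules $P\otimes R$, and the finite-generation hypothesis then guarantees that $F(N)$ is a finitely generated $R$-module for every $N$.

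The heart of the argument is to make the \emph{right} choice of $M$: one must take $M$ to be a \emph{projective} generator, for otherwise $F$ fails to be full or essentially surjective (already for $\pP=\rm{Vect}_k$, a non-projective generator produces an $R$ with too many simple modules). Equivalently, I need a generator for which $F=\uHom(M,-)$ is exact; left exactness is automatic from representability, and right exactness is exactly the statement that each $P\otimes M$ is projective with respect to all epimorphisms. This is the step where the Chevalley property is indispensable, and I would carry it out in two stages. First, using that $X\otimes-$ is exact and that, by Chevalley, $X\otimes S$ is semisimple for every simple $S\in\mM$ and $X\in\pP$, one shows that the simple objects of $\mM$ are finite in number up to the action of $\pP$: every simple object is a direct summand of $P\otimes S_j$ for a simple $P\in\pP$ and one of the finitely many simple subquotients $S_1,\dots,S_m$ of a chosen generator. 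Second, I would construct a projective cover $P(S_j)$ of each $S_j$ and set $M:=\bigoplus_j P(S_j)$; here finite length together with the finite-dimensionality over $k$ of the Hom-spaces permits the usual idempotent-lifting/Fitting arguments, while Chevalley guarantees that $P\otimes-$ preserves projectives (its adjoint ${}^*P\otimes-$ is exact, so $\Hom(P\otimes Q,-)\cong\Hom(Q,{}^*P\otimes-)$ is exact whenever $Q$ is). Thus $P\otimes M$ is projective for all $P$, and since these cover every simple object, $M$ is a projective generator.

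With a projective generator $M$ in hand, $F=\uHom(M,-)$ is exact, faithful (because $M$ generates), and the remaining work is a relative form of Morita theory. I would exhibit the left adjoint $L:=-\otimes_R M:\rm{Mod}(R)\to\mM$, defined on a free module $P\otimes R$ by $L(P\otimes R)=P\otimes M$ and in general by the coequalizer presenting an arbitrary finitely generated module. On free modules the unit $V\to FL(V)$ and the counit $LF(N)\to N$ are isomorphisms by the computation $F(P\otimes M)\cong P\otimes R$ above; since every finitely generated $R$-module has a presentation $P_1\otimes R\to P_0\otimes R\to V\to 0$ by frees, and both $F$ (exact, since $M$ is projective) and $L$ (right exact) respect such presentations, a diagram chase propagates the isomorphisms from free modules to all modules. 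This shows unit and counit are isomorphisms, so $F$ and $L$ are mutually inverse equivalences and $\mM\simeq\rm{Mod}(R)$.

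The main obstacle is squarely the construction of a projective generator, equivalently the exactness of $\uHom(M,-)$: unlike the semisimple situation treated by Ostrik, here one must produce enough projectives in a non-semisimple $\mM$, and it is precisely the Chevalley property—ensuring the semisimplicity of $X\otimes S$, hence both the finiteness of simples modulo $\pP$ and the stability of projectives under the $\pP$-action—that makes this possible. Once exactness is secured, the equivalence is a formal consequence of the adjunction $L\dashv F$ and the presentation of modules by frees.
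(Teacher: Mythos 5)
Your overall architecture matches the paper's: produce a projective $\pP$-generator $Q$, set $R=\uHom(Q,Q)$, and show that $\uHom(Q,-)$ is a $\pP$-module equivalence onto $\Mod(R)$; the Morita-style endgame you describe (exactness, faithfulness, presentation of modules by free objects $P\otimes R$) is essentially what the paper does. The gap is in the step you yourself identify as the heart of the matter: the existence of projective covers of the simples. You assert that ``finite length together with the finite-dimensionality over $k$ of the Hom-spaces permits the usual idempotent-lifting/Fitting arguments,'' but these hypotheses do not produce projective objects at all: the category of finite-dimensional representations of $\mathbb{G}_a$ in characteristic $0$ is locally finite with finite-dimensional Hom-spaces and has no nonzero projectives. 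Idempotent lifting yields indecomposable projective summands only once one already possesses some projective surjecting onto the given simple, and in $\mM$ the objects $P\otimes X$ (for $X$ a $\pP$-generator) are not projective a priori. The tool that actually manufactures projectives is Deligne's lemma (Lemma \ref{lem1} of the paper), and it requires the category to be finitely generated over $k$ --- which $\mM$ is not; it is only finitely generated over $\pP$.

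The paper bridges exactly this point, and the bridge is what your proposal is missing. For simple $S,T\in\mM$ one has $\Hom(A\otimes S,T)\simeq\Hom_\pP(A,\uHom(S,T))$ with $\uHom(S,T)\in\pP$, so only finitely many simple $A\in\pP$ admit a nonzero map $A\otimes S\to T$; the Chevalley property upgrades this to the statement that only finitely many simple $A$ have $\ell_T(A\otimes S)>0$ (without semisimplicity of $A\otimes S$, the simple $T$ could occur as a composition factor without being a quotient). This finiteness permits forming the full subcategory $\nN$ of subquotients of $(A\otimes X)^n$ for a single well-chosen $A\in\pP$, and $\nN$ \emph{is} finitely generated over $k$; Lemma \ref{lem1} then gives projective covers $Q_i\to S_i$ in $\nN$, and the same dimension criterion, verified on the generators $B\otimes X$ for simple $B\in\pP$ (either $B$ is one of the finitely many relevant simples, or $\ell_{S_i}(B\otimes X)=0$), shows the $Q_i$ remain projective in all of $\mM$. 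A smaller point: your parenthetical attributing ``$P\otimes-$ preserves projectives'' to the Chevalley property is misplaced --- that follows from exactness of ${}^*P\otimes-$, valid in any module category over a rigid $\pP$. The essential uses of Chevalley are the passage from ``$\Hom(A\otimes S,T)=0$ for almost all $A$'' to ``$\ell_T(A\otimes S)=0$ for almost all $A$'' just described, and the surjection $D\otimes S_i\to S$ needed in the faithfulness argument.
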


The main idea of the proof is to show the existence of a projective generator in $\cal{M}$. We recall some definitions from \cite{D}. An epimorphism $M\to N$ in an abelian category is called \emph{essential} 
if there is no proper subobject $M'\subset M$ such that the composite $M'\to M\to N$ is an epimorphism. We also denote the number of composition factors of $M$ which are isomorphic to a simple object $S$ by $\ell_S(Y)$.

\begin{lem}\cite{D}\label{lem1}
Suppose $\cal{M}$ is a $k$-linear {abelian} category with finite dimensional Hom-spaces and all objects have finite length. If $Q\to S$ is an essential surjection to a simple object $S$ in $\cal{M}$ then for every object $Y$,
$$\rm{dim}_k \rm{Hom}(Q,Y) \leq \ell_S(Y)\cdot \rm{dim}_k \rm{End}(S).$$
The equality holds for every $Y$ if and only if it holds for a set of generators (over $k$) if and only if $Q$ is projective. Moreover if $\cal{M}$ is finitely generated (over $k$), for every simple object $S$ there is an essential surjection $Q\to S$ from a projective object $Q$.
\end{lem}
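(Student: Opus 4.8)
The plan is to establish Lemma \ref{lem1} by analyzing the interplay between essential surjections onto a simple object and the dimension of Hom-spaces, following the standard technique for constructing projective covers in finite-length abelian categories.

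The plan is to prove the numerical inequality first by dévissage on the length of $Y$, then read off the equality/projectivity equivalences as a formal consequence, and finally construct the projective essential cover as a maximal essential extension. The only place where essentialness enters is the base case, so I would isolate its consequence at the outset: since $S$ is simple, any \emph{proper} subobject of $Q$ maps to $0$ under $Q\to S$, hence is contained in $K:=\ker(Q\to S)$; thus $K$ is the unique maximal subobject of $Q$ and $Q/K\cong S$.

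For the inequality, I would induct on $\mathrm{length}(Y)$. In the base case $Y$ simple, a nonzero map $Q\to Y$ is an epimorphism whose kernel is a maximal proper subobject, hence equals $K$, forcing $Y\cong S$; therefore $\Hom(Q,Y)=0$ when $Y\not\cong S$, while $\Hom(Q,S)\cong\mathrm{End}(S)$ by factorization through $Q\to Q/K$. In both cases $\dim_k\Hom(Q,Y)=\ell_S(Y)\cdot\dim_k\mathrm{End}(S)$. For the inductive step I would apply left-exactness of $\Hom(Q,-)$ to a short exact sequence $0\to Y'\to Y\to Y''\to 0$, giving $\dim_k\Hom(Q,Y)\le \dim_k\Hom(Q,Y')+\dim_k\Hom(Q,Y'')$, and combine it with the additivity $\ell_S(Y)=\ell_S(Y')+\ell_S(Y'')$; the induction then yields the bound for all $Y$.

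To handle the equivalences, I would introduce the defect $\delta(Y):=\ell_S(Y)\cdot\dim_k\mathrm{End}(S)-\dim_k\Hom(Q,Y)\ge 0$. The two estimates above show $\delta$ is superadditive on short exact sequences and additive on finite direct sums; being nonnegative, its vanishing locus is closed under subquotients and finite direct sums. Since every object is a subquotient of a finite direct sum of the chosen generators, equality on a set of generators propagates to all $Y$, which gives the ``set of generators'' reduction (the reverse implication being trivial). If $\delta\equiv 0$, then every inductive inequality is an equality, so $\Hom(Q,-)$ sends short exact sequences to short exact sequences, i.e.\ $Q$ is projective; conversely projectivity makes $\Hom(Q,-)$ exact and the induction delivers equality throughout.

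For the existence of a projective essential cover I would order the essential surjections $Q\to S$ by factorization over $S$: a compatible epimorphism $\widetilde Q\to Q$ induces an injection $\Hom(Q,Y)\hookrightarrow\Hom(\widetilde Q,Y)$, so both $\mathrm{length}(Q)$ and the Hom-dimensions grow monotonically along such a tower. The main obstacle is to guarantee that the tower terminates, i.e.\ to bound the length of essential extensions of $S$; here I would use finite generation over $k$ (in the model of finitely generated modules over the finite-dimensional endomorphism algebra of a generator, an essential extension of $S$ is cyclic, hence of bounded length), producing a maximal essential extension $P\to S$. Maximality then forces projectivity: a non-split extension $0\to V\to E\to P\to 0$ with $V$ simple would, after passing to a minimal subobject of $E$ still surjecting onto $S$, yield a proper essential self-extension of $P$ and contradict maximality; hence $\mathrm{Ext}^1(P,-)=0$ and $P$ is projective. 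I expect this last boundedness step to be the technical heart of the argument, with the rest being formal dévissage.
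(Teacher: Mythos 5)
First, a point of comparison: the paper does not prove this lemma at all --- it is quoted from Deligne \cite{D} and used as a black box in the proof of Theorem \ref{gGabber} --- so I am measuring your proposal against Deligne's argument rather than an in-paper one. Your treatment of the inequality and of the equivalences is correct and is the standard d\'evissage: the observation that $\ker(Q\to S)$ is the unique maximal subobject of $Q$, the base case $\Hom(Q,T)=0$ for $T\not\cong S$ and $\Hom(Q,S)\cong\mathrm{End}(S)$, induction on length via left-exactness of $\Hom(Q,-)$, and the superadditive nonnegative defect $\delta$ giving both the reduction to generators and the equivalence with projectivity all check out.

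The gap is in the existence of a projective essential cover, exactly where you flagged the ``technical heart.'' Your termination step bounds the length of essential covers of $S$ by working ``in the model of finitely generated modules over the finite-dimensional endomorphism algebra of a generator''; but the assertion that $\mathcal{M}$ is such a module category is precisely Gabber's theorem (Theorem \ref{gGabber}), which this lemma is being used to prove, so that appeal is circular, and I do not see a direct bound on $\mathrm{length}(Q)$ for essential covers $Q\to S$ in a general $\mathcal{M}$. The quantity that the already-proved inequality does bound is $\dim_k\Hom(Q,X)$ for a generator $X$, namely by $\ell_S(X)\cdot\dim_k\mathrm{End}(S)$, and Deligne maximizes this instead. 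But then your closing step also needs repair: for a maximizer $P$ and a non-split extension $0\to V\to E\to P\to 0$ with $V$ simple, applying $\Hom(-,X)$ shows that the cokernel of $\Hom(P,X)\hookrightarrow\Hom(E,X)$ embeds in $\Hom(V,X)$ and may well vanish, so $E$ need not strictly beat $P$ for the invariant you can actually bound, and maximality is not contradicted. Deligne's construction instead takes a witness of the failure of equality for $Y=X$ (a layer of a composition series of $X$ with quotient isomorphic to $S$ on which $\Hom(Q,-)$ fails to be right exact), pulls $Q$ back along it, and passes to a minimal subobject still covering $S$; this yields an essential cover $Q'\to S$ with an epimorphism $Q'\to Q$ over $S$ for which $\dim_k\Hom(Q',X)>\dim_k\Hom(Q,X)$ is forced, and boundedness then terminates the process. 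So your skeleton --- maximize a bounded invariant and show non-projectivity permits a strict improvement --- is the right one, but both the invariant and the improvement step must be the ones tied to the generator $X$ for the argument to close.
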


\begin{proof}[Proof of $\,$\ref{gGabber}]
Let $S$ and $T$ be two simple objects in $\cal{M}$. Then for every simple object $A\in\cal{P}$, we have
$$\rm{Hom}(A\otimes S, T) = \rm{Hom}(A, \underline{\rm{Hom}}(S,T)).$$ 
Now $\underline{\rm{Hom}}(S,T)$ is in $\cal{P}$ and so there are finitely many equivalence classes of simple objects $A\in \cal{P}$ such that $\rm{Hom}(A\otimes S, T)$ is nonzero. By the Chevalley condition $A\otimes S$ is semisimple and so there are finitely many equivalence classes of simple objects $A\in \cal{P}$ such that $\ell_T(A\otimes S)>0.$ Decomposing an arbitrary object to simples shows that this result is true even when $S$ is not simple.

Now let $S_1,\dots,S_k$ be all of composition factors for a generator $X$ of $\cal{M}$ over $\cal{P}$, and $A_1,\dots,A_m$ be a complete set of representatives  for equivalence classes of simple objects $A$ in $\pP$ such that $S_i$ appears in the composition factors of $A\otimes X$. Now define $A= {\bf 1}\oplus A_1 \oplus\cdots\oplus A_m$ and let $\cal{N}$ be the full subcategory of subquotients of $(A\otimes X)^n$ in $\mM$. Then $\nN$ contains $X$ and satisfies the conditions of Lemma \ref{lem1}, so for each $1\leq i\leq k$ there exists some essential surjection $Q_i \to S_i$ in $\cal{N}$ such that every $Q_i$ is projective in $\cal{N}$. 

We claim that $Q_i$ is also projective in $\mM$. Again by lemma it is suffices to show that for every simple object $B\in \pP$,
$$\rm{dim}_k \rm{Hom}(Q_i,B\otimes X) = \ell_{S_i}(B\otimes X)\cdot \rm{dim}_k \rm{End}(S_i).$$
There are two cases: 1) If $B$ is isomorphic to one of $A_1,\dots,A_m$, then $B\otimes X$ lies in $\cal{N}$ and the equality holds because $Q_i$ is projective in $\cal{N}$. 2) If $B$ is not isomorphic to one of $A_i$'s then $\ell_{S_i}(B\otimes X)=0$ and so two sides are equal to zero. This justifies our claim and so $Q_i$'s are projective in $\mM$. 

Now define $Q= Q_1\oplus \cdots\oplus Q_k$ and $R= \underline{\rm{End}}(Q)$. Then $Q$ is a projective object in $\mM$ and the functor
$$Y \mapsto \underline{\rm{Hom}}(Q,Y)$$
defines a  module functor $F$ from $\mM$ to the category of  right $R$-modules. The locally finiteness of $\mM$ over $\pP$ implies that the internal Homs are in $\pP$ and so $F(Y)$ is finitely generated $R$-module for every $Y\in \mM$. We claim that $F:\mM\to \Mod(R)$ is an
equivalence of module categories over $\pP$.

First we show that $F$ is exact and faithful. Because $\pP$ is semisimple, for the exactness it suffices to show that for every $Z\in \pP$, $\rm{Hom}(Z,F(-))$ is exact. But we have
$$\rm{Hom}(Z,F(M)) = \rm{Hom}(Z,\underline{\rm{Hom}}(Q,M)) = \rm{Hom}(Q, {}^*Z\otimes M).$$
And $\rm{Hom}(Q, {}^*Z\otimes -) $ is exact by exactness of ${}^*Z\otimes -$ and projectivity of $Q$.

Now let $M$ be a nonzero object in $\mM$ and $S$ a simple subobject of $M$. Then $S$ is a subquotient of $C\otimes X$ for some  object $C\in \pP$ and so it is a subquotient of $D\otimes S_i$ for some simple object $D\in \pP$ and $1\leq i\leq k$. Therefore by the Chevalley property, $D\otimes S_i$ is semisimple and there is a surjection $D\otimes S_i\to S$. Combining this morphism with a surjection $D\otimes Q\to D\otimes S_i$ and the inclusion $S\hookrightarrow M$, yields a nonzero morphism $D\otimes Q\to M$. So 
$$\Hom(D,F(M)) = \Hom(D,\uHom(Q,M)) \simeq \Hom(D\otimes Q,M) \neq 0,$$
and $F(M)\neq 0$. So $F$ is an exact functor which sends every nonzero object to a nonzero object and therefore $F$ is faithful.

It remains to show that $F$ is full and essentially surjective. We have

\begin{multline*}
\Hom_{\mM}(C\otimes Q,M) \simeq \Hom_{\pP}(C,\uHom(Q,M)) \simeq \\
 \Hom_{R}(C\otimes R, F(M)) \simeq \Hom_{R}(F(C\otimes Q),F(M)).
\end{multline*}

Every object $N\in \mM$ is a quotient of $C\otimes Q$ for some $C\in \pP$ (it is because every simple object in the composition series of $N$ has this property and all of $C\otimes Q$'s are projective) and the kernel of this epimorphism is also a quotient of $D\otimes Q, D\in \pP$. So there is an exact sequence:
$$D\otimes Q\to C\otimes Q\to N\to 0.$$
Applying $\Hom(-,M)$ to this sequence and exactness of $F$, implies that
$$\Hom_\mM(N,M) \simeq \Hom_{R}(F(N),F(M)).$$

Now for proving the essential surjectivity, consider an arbitrary finitely generated module $Y$ over $R$. Then there is a natural exact sequence in ${\rm{Mod}(R)}$: ($Y, Y\otimes R \in \pP$)
$$Y\otimes R\otimes R\stackrel{f}{\to} Y\otimes R\to Y\to 0.$$
Now let $\tilde{f}:(Y\otimes R)\otimes Q\to Y\otimes Q$ be a morphism in $\mM$ which maps to $f$ under $F$. If $M$ be the cokernel of this morphism, then $F(M)\simeq Y$ and so F is essentially surjective.
\end{proof}

\begin{lem}\label{submodule}
With  assumptions of Theorem \ref{gGabber}, if $\nN$ is a full $\pP$-module subcategory of $\mM$, then there is a two sided ideal $I$ of $R$ such that the inclusion of $\nN$ in $\mM$ is equivalent to the inclusion of $\rm{Mod}(R/I)$ in $\rm{Mod}(R)$.
\end{lem}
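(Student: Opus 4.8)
The plan is to transport the problem along the equivalence $F:\mM\to\Mod(R)$ of Theorem \ref{gGabber} and to produce $I$ as an annihilator ideal. Since $F$ is an equivalence of $\pP$-module categories, it carries $\nN$ to a full $\pP$-module subcategory $\nN':=F(\nN)$ of $\Mod(R)$, again closed under the $\pP$-action and under subquotients, and it suffices to find a two-sided ideal $I\subseteq R$ with $\nN'=\Mod(R/I)$; applying $F^{-1}$ then identifies the inclusion $\nN\hookrightarrow\mM$ with $\Mod(R/I)\hookrightarrow\Mod(R)$. Throughout I use that, $R$ being finite, every object of $\Mod(R)$ lies in $\pP$, and that objects of $\pP$ have finite length (a consequence of semisimplicity and finite-dimensional Hom-spaces), so the lattice of subobjects of $R$, and in particular of two-sided ideals, has finite length.

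The key construction is an annihilator for each $M\in\Mod(R)$. Writing $\rho:M\otimes R\to M$ for the action and $i'_M:{\bf 1}\to{}^*M\otimes M$ for the coevaluation of the left dual, I define
$$\gamma_M:R\xrightarrow{\,i'_M\otimes\mathrm{id}\,}{}^*M\otimes M\otimes R\xrightarrow{\,\mathrm{id}\otimes\rho\,}{}^*M\otimes M,$$
where ${}^*M\otimes M$ is an algebra (internal endomorphism object of $M$, with multiplication $\mathrm{id}\otimes e'_M\otimes\mathrm{id}$) carrying the right $R$-module structure induced by $\rho$ on its rightmost factor. A zig-zag computation shows that $\gamma_M$ is a morphism of right $R$-modules and an algebra anti-homomorphism (it is the internal version of the right-multiplication map $r\mapsto(m\mapsto mr)$). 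I then set $\mathrm{ann}(M):=\ker\gamma_M$. Being the kernel of an algebra anti-homomorphism, it is a two-sided ideal; using the rigidity pairing one checks that its mate $M\otimes\mathrm{ann}(M)\to M$ vanishes, so that $M$ is a right $R/\mathrm{ann}(M)$-module, and since $\pP$ is semisimple the map $\gamma_M$ induces an isomorphism of right $R$-modules $R/\mathrm{ann}(M)\cong\mathrm{im}\,\gamma_M\hookrightarrow{}^*M\otimes M$.

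With this in hand, define $I:=\bigcap_{M\in\nN'}\mathrm{ann}(M)$; by the finite-length remark this intersection is attained by finitely many $M_1,\dots,M_n\in\nN'$, so $I$ is a two-sided ideal and $R/I$ a finite algebra. Each $M\in\nN'$ is killed by $I\subseteq\mathrm{ann}(M)$, giving $\nN'\subseteq\Mod(R/I)$. For the reverse inclusion I use the two closure properties of $\nN'$: for $M\in\nN'$ the module ${}^*M\otimes M$ lies in $\nN'$ (closure under the $\pP$-action, as ${}^*M\in\pP$), hence so does its subobject $R/\mathrm{ann}(M)\cong\mathrm{im}\,\gamma_M$; consequently $R/I$, being a subobject of the finite direct sum $\bigoplus_{i=1}^{n}R/\mathrm{ann}(M_i)\in\nN'$, also lies in $\nN'$; and finally any $Y\in\Mod(R/I)$ is a quotient of $A\otimes(R/I)$ for some $A\in\pP$, which lies in $\nN'$, so $Y\in\nN'$ by closure under quotients. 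Thus $\nN'=\Mod(R/I)$, as required. I expect the only real work to be the diagrammatic verification that $\gamma_M$ is right $R$-linear and (anti-)multiplicative, equivalently that $\mathrm{ann}(M)$ is a genuine two-sided ideal annihilating $M$; everything else reduces to the closure properties of $\nN'$ and the finiteness of $R$.
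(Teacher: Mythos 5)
Your proof is correct, and it takes a recognizably different route from the paper's, which is only a two-line reduction: transport along the equivalence $F:\mM\to\Mod(R)$ of Theorem~\ref{gGabber} and then adapt Deligne's argument (\cite{D}, 2.18) --- among the quotients of $R$ lying in the subcategory there is a maximal one $R/I$ (by finite length of $R$), $I$ is forced to be two-sided by that maximality, and $R/I$ is a projective $\pP$-generator of the subcategory. You instead build $I$ from below, as the intersection of the internal annihilators $\ker\gamma_M$, using the rigidity of $\pP$ to internalize the right-regular representation $r\mapsto(m\mapsto mr)$; two-sidedness then comes from $\gamma_M$ being an algebra anti-homomorphism rather than from a maximality argument, and the identification $\nN'=\Mod(R/I)$ is carried out directly via $R/\mathrm{ann}(M)\cong\mathrm{im}\,\gamma_M\hookrightarrow{}^*M\otimes M$ and the closure properties of $\nN'$, with no appeal to projectivity of $R/I$. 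The two constructions yield the same ideal; yours is self-contained where the paper defers to \cite{D}, at the cost of the (routine but nontrivial) diagrammatic check that $\gamma_M$ is right $R$-linear and anti-multiplicative, which you correctly isolate as the only real work. You are also right to make explicit a hypothesis the paper leaves implicit: ``full $\pP$-module subcategory'' has to mean closed under subquotients and finite direct sums, as it is for the subcategories $\langle X\rangle$ to which the lemma is applied in the proof of Theorem~\ref{fiber}; your argument uses exactly these closure properties, and without them the statement is false (for instance for the full subcategory of projective $R$-modules).
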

\begin{proof}
The proof is completely similar to a corresponding result for $k$-linear categories (cf. \cite{D}, 2.18). The idea is that for a finite algebra $R$ and a full $\pP$-module subcategory $\cal{C}\subset \rm{Mod}(R)$, there is a maximal quotient $R/I$ of $R$ which lies in $\cal{C}$, $I$ is a two sided ideal and $R/I$ is a projective $\pP$-generator for $\cal{C}$. 
\end{proof}

\section{Module functors and coalgebroids}\label{fiber-sec}
\subsection{Associated coalgebroid of  a module functor}
\begin{defn}
Let $S$ be an algebra in $\pP$, then a right $S$-module $M$ is said to be \emph{dualizable} if there is a left $S$-module $M^*$ with a natural equivalence:
$$\Hom_S(A\otimes M, N) \simeq \Hom_{\pP}(A, N\otimes_S M^*), $$
for any $A\in \pP$ and any right $S$-module $N$.
\end{defn}

This definition is equivalent to have two morphisms:
$$e_M: M^*\otimes M \to S, \; i_M: {\bf 1}\to M\otimes_S M^*, $$
with the properties that the composition of the following morphisms are identity:
$$ M\stackrel{i_M\otimes id}{\longrightarrow}M \otimes_S M^* \otimes M \stackrel{id \otimes e_M}{\longrightarrow} M,$$
$$ M^* \stackrel{id \otimes i_M}{\longrightarrow} M^*\otimes M\otimes_S M^* \stackrel{e_M\otimes id}{\longrightarrow} M^*.$$

\begin{rem}
(i) Let $R,S$ be two algebras in $\pP$ and $M$ a $(R,S)$-bimodule which is a dualizable right $S$-module, then $M^*$ has a natural structure of a right $R$-module and for every right $R$-module $X$ and right $S$-module $Y$: 
$$\Hom_S(X\otimes_R M,Y) \simeq \Hom_R(X,Y\otimes_S M^*). $$

So there are two bimodule morphisms:
$$e_M^R: M^*\otimes_R M \to S, \; i^R_M: R\to M\otimes_S M^*,$$
satisfying properties similar to $(e_M,i_M)$.

(ii)Let $R,S$ be two algebras in $\pP$ and $M$ a dualizable right $S$-module, then for every right $R$-module $X$ and $(S,R)$-bimodule $Y$: 
$$\Hom_R(X, M\otimes_S Y) \simeq \Hom_{(S,R)}(M^*\otimes X, Y).$$ 
(iii) For a dualizable right module $M$ and $A\in \pP$, $(A\otimes M)^*$ is naturally equivalent to $M^*\otimes A^*$.
\end{rem}

The following lemma is an extension of Morita's lemma in the classical categories of modules:
\begin{lem}\label{right-exact}
Let $R$ and $S$ be two algebras in $\pP$ and $\omega:\rm{Mod}(R)\to \rm{Mod}(S)$ be a right exact $\pP$-module functor, then there exists a $(R,S)$-bimodule $M$ such that $\omega$ is $\pP$-module isomorphic to $-\otimes_R M$. Moreover for any two $(R,S)$-bimodules $M_1,M_2$ there is natural isomorphism:
$$\Hom_\pP(-\otimes_R M_1,-\otimes_R M_2) \simeq \Hom_{(R,S)}(M_1,M_2).$$
\end{lem}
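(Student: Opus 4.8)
We have $\mathcal{P}$ a $k$-linear semisimple abelian rigid monoidal category, $R, S$ two algebras in $\mathcal{P}$, and $\omega: \mathrm{Mod}(R) \to \mathrm{Mod}(S)$ a right exact $\mathcal{P}$-module functor. We want:
1. A bimodule $M$ such that $\omega \cong -\otimes_R M$ as $\mathcal{P}$-module functors.
2. $\Hom_\mathcal{P}(-\otimes_R M_1, -\otimes_R M_2) \cong \Hom_{(R,S)}(M_1, M_2)$.

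This is the classical Eilenberg–Watts / Morita theorem, internalized. Let me think about how the classical proof goes and whether it transports.

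**Classical approach.** The classical Eilenberg–Watts theorem: a right-exact functor $\mathrm{Mod}(R) \to \mathrm{Mod}(S)$ commuting with colimits (coproducts) is naturally isomorphic to $-\otimes_R M$ where $M = \omega(R)$, with its natural $(R,S)$-bimodule structure. The left $R$-action on $\omega(R)$ comes from $R = \mathrm{End}_R(R)^{\mathrm{op}}$ acting on $R$, pushed through $\omega$.

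**How I would set up $M$.** Define $M := \omega(R)$. As $\omega$ is a right $S$-module functor (functor into $\mathrm{Mod}(S)$), $M$ is a right $S$-module. Note $R$ acts on the right $R$-module $R$ by left multiplication, giving a map $R \to \mathrm{End}_{\mathrm{Mod}(R)}(R)$ (internally, $\underline{\mathrm{End}}$). Applying $\omega$ gives $R \to \underline{\mathrm{End}}_S(M)$, i.e. a left $R$-module structure on $M$ compatible with the right $S$-structure. So $M$ is an $(R,S)$-bimodule. This is the same recipe as the classical case; the $\mathcal{P}$-module structure of $\omega$ ensures the internal endomorphism maps are respected.

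**The natural transformation $-\otimes_R M \Rightarrow \omega$.** For a right $R$-module $X$, I want a natural map $X \otimes_R M \to \omega(X)$. The starting point is the case $X = A \otimes R$ for $A \in \mathcal{P}$ (free modules). Here $(A\otimes R)\otimes_R M \cong A \otimes M = A \otimes \omega(R)$, and the $\mathcal{P}$-module structure isomorphism $c_{A,R}: \omega(A \otimes R) \cong A \otimes \omega(R)$ gives an isomorphism. So on free modules the transformation is an iso, built purely from $c$. Then for general $X$, take a presentation $A' \otimes R \to A \otimes R \to X \to 0$ (every f.g. $R$-module is a quotient of a free one, and the kernel likewise), and use right-exactness of both $-\otimes_R M$ and $\omega$ to extend the iso from free modules to $X$ by the five lemma / cokernel comparison.

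**Proof plan.**

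The plan is to follow the internalized Eilenberg–Watts argument. First I would set $M := \omega(R)$, equipped with its right $S$-module structure (coming from $\omega$ landing in $\mathrm{Mod}(S)$) and a left $R$-module structure obtained by applying $\omega$ to the left-multiplication action $R \to \underline{\mathrm{End}}_R(R)$; compatibility of these two actions makes $M$ an $(R,S)$-bimodule. Next I would construct a natural transformation $\eta_X : X \otimes_R M \to \omega(X)$. On free modules $X = A \otimes R$ ($A \in \mathcal{P}$) this is forced: $(A\otimes R)\otimes_R M \cong A \otimes M = A \otimes \omega(R)$, and the module-functor coherence isomorphism $c_{A,R}^{-1}$ identifies $A \otimes \omega(R)$ with $\omega(A \otimes R)$, giving an isomorphism on all free modules. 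I would then verify naturality of $\eta$ on free modules, which is where the coherence diagrams in the definition of module functor are used.

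To pass to arbitrary $X$, I would use that every object of $\mathrm{Mod}(R)$ admits a presentation $A' \otimes R \to A \otimes R \to X \to 0$ by frees, together with the right-exactness of both functors $-\otimes_R M$ and $\omega$ (the former is right exact as a tensor with a bimodule; the latter by hypothesis). Comparing the two right-exact sequences via $\eta$, which is already an isomorphism on the two free terms, forces $\eta_X$ to be an isomorphism on cokernels; a diagram chase (cokernel universal property rather than the five lemma, since we only have right-exactness) finishes this. This establishes part (1): $\omega \simeq -\otimes_R M$.

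For part (2), the $\Hom$ formula, I would argue that $\Hom_{(R,S)}(M_1, M_2) \to \Hom_\mathcal{P}(-\otimes_R M_1, -\otimes_R M_2)$ sending a bimodule map $f$ to the natural transformation $-\otimes_R f$ is a bijection. Injectivity is immediate by evaluating at $X = R$, recovering $f$ itself. Surjectivity is the substantive direction: given a $\mathcal{P}$-module natural transformation $\nu : -\otimes_R M_1 \Rightarrow -\otimes_R M_2$, its component at $X = R$ is a right $S$-module map $M_1 = R\otimes_R M_1 \to R \otimes_R M_2 = M_2$, and I would check it is automatically left $R$-linear, using naturality of $\nu$ against the left-multiplication endomorphisms of $R$ (or equivalently the $\mathcal{P}$-module compatibility of $\nu$ applied to the free modules $A \otimes R$). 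Naturality then shows $\nu$ is recovered as $-\otimes_R f$, first on frees and then on all $X$ by the same presentation argument as above.

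The main obstacle I anticipate is the careful bookkeeping in the internal setting: in the classical theorem one freely uses elements and the generator $R$, whereas here all constructions must be phrased via internal Hom, the coherence isomorphisms $m, c$ of the module structures, and the representability built into $\underline{\mathrm{Hom}}$ and the dualizability/tensor $-\otimes_R -$. In particular, verifying that the left $R$-action on $M = \omega(R)$ is well defined and compatible with the right $S$-action—and that $\eta$ is genuinely natural and a morphism of bimodule-induced functors rather than merely object-wise an isomorphism—requires chasing the pentagon-type coherence diagrams from the definition of module functor. I expect the rest (right-exactness comparison and the $\Hom$ bijection) to be formal once the bimodule $M$ and the transformation $\eta$ are set up correctly.
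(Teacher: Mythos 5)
Your proposal is correct and follows essentially the same route as the paper: both set $M=\omega(R)$, endow it with a left $R$-action by applying $\omega$ to the multiplication $p:R\otimes R\to R$ viewed as a morphism of right $R$-modules, deduce $\omega\simeq -\otimes_R M$ from right-exactness, and prove the $\Hom$ formula by evaluating a natural transformation at $R$ and checking left $R$-linearity via naturality. The only notable difference is that the paper applies $\omega$ to the canonical presentation $X\otimes R\otimes R\to X\otimes R\to X\to 0$ (valid uniformly in $X$ after extending $\omega$ to ind-objects), which yields naturality of the comparison map for free and sidesteps your need to choose free presentations $A'\otimes R\to A\otimes R\to X\to 0$ and verify independence of that choice.
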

\begin{proof}
Let $M = \omega(R)$. Then $M$ is a finitely generated right $S$-module. Now the multiplication morphism $p:R\otimes R\to R$ can be considered as a $R$-module morphism, which induces a $S$-morphism $\omega(p): R\otimes M\to M$. (We can extend $\omega$ to ind-objects of $\rm{Mod}(R)$ which are arbitrary right $R$-modules and the resulting functor is an $\rm{ind}(\pP)$-module functor also denoted by $\omega$.) By the associativity of $p$ and unit morphism of $R$, $\omega(p)$ satisfies the axioms for a left $R$-module which is compatible with the $S$-module structure of $M$. So $M$ is a $(R,S)$-bimodule.

Now let $X$ be an arbitrary right $R$-module with structure morphism $m:X\otimes R\to X$. Then there is a natural exact sequence:
$$X\otimes R\otimes R\xrightarrow{id\otimes p - m\otimes id} X\otimes R \stackrel{m}{\longrightarrow} X,$$
applying the exact module functor $\omega$ to this sequence implies that $\omega(X)$ is naturally isomorphic to $X\otimes_R M$.

For the second part, define $\omega_i = -\otimes_R M_i, \, i=1,2$. Then a $\pP$-morphism $\nu:\omega_1\to \omega_2$ gives a $S$-module morphism $\nu_R:M_1\to M_2$. By the previous part the $R$-module structure of $M_i$ can also be reconstructed from the module functor $\omega_i$ for $i=1,2$. This implies that $\nu_R$ is also a $(R,S)$-bimodule morphism. On the other hand a $(R,S)$-bimodule morphism $M_1\to M_2$ gives rise to a natural $\pP$-morphism $\omega_1\to \omega_2$ and this is inverse to the previous construction.
\end{proof}

\begin{lem}\label{proj-dual}
A right $S$-module is dualizable if and only if it is finitely generated and projective.
\end{lem}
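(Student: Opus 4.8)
The plan is to prove the two implications separately, starting with the easy direction that a finitely generated projective module is dualizable. First I would check that every free module $A\otimes S$ with $A\in\pP$ is dualizable, with dual the left $S$-module $S\otimes A^*$, where $A^*$ is the right dual of $A$ in the rigid category $\pP$: combining the free-module adjunction $\Hom_S(B\otimes A\otimes S,N)\simeq\Hom_\pP(B\otimes A,N)$ with the rigidity isomorphism $\Hom_\pP(B\otimes A,N)\simeq\Hom_\pP(B,N\otimes A^*)$ and the identification $N\otimes A^*\simeq N\otimes_S(S\otimes A^*)$ reproduces exactly the defining natural equivalence. Next I would note that dualizable modules are closed under retracts: if $P$ is dualizable and $M$ is a direct summand of $P$ (summands split since $\Mod(S)$ is abelian and idempotents split), then conjugating $e_P$ and $i_P$ by the section and retraction yields maps $e_M,i_M$ for which the zigzag identities reduce to those for $P$ — a routine check. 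Finally, a finitely generated projective $M$ admits an epimorphism $A\otimes S\twoheadrightarrow M$ (finite generation) which splits (projectivity), so $M$ is a direct summand of a free module and is therefore dualizable.

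For the converse, suppose $M$ is dualizable with structure morphisms $e_M:M^*\otimes M\to S$ and $i_M:\mathbf{1}\to M\otimes_S M^*$ satisfying the zigzag identities recorded after the definition. Projectivity comes straight from the adjunction: taking $A=\mathbf{1}$ gives a natural isomorphism $\Hom_S(M,N)\simeq\Hom_\pP(\mathbf{1},N\otimes_S M^*)$. Since $-\otimes_S M^*$ is right exact it preserves epimorphisms, and since $\pP$ is semisimple the unit $\mathbf{1}$ is projective, so $\Hom_\pP(\mathbf{1},-)$ preserves epimorphisms as well; composing, $\Hom_S(M,-)$ preserves epimorphisms, which is exactly the statement that $M$ is projective.

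For finite generation I would exploit the coevaluation. Writing $M$ as the filtered union of its finitely generated $S$-submodules $M_i$ and using that $-\otimes_S M^*$ commutes with filtered colimits gives $M\otimes_S M^*\simeq\mathrm{colim}_i\,(M_i\otimes_S M^*)$. Because $\mathbf{1}$ is a compact object of $\mathrm{ind}(\pP)$, the morphism $i_M$ factors through some stage, say $i_M=(\iota\otimes\mathrm{id})\circ j$ with $\iota:M_{i_0}\hookrightarrow M$ the inclusion and $j:\mathbf{1}\to M_{i_0}\otimes_S M^*$. Feeding this factorization into the zigzag identity $\mathrm{id}_M=(\mathrm{id}_M\otimes_S e_M)\circ(i_M\otimes\mathrm{id}_M)$ exhibits $\mathrm{id}_M$ as a composite $M\xrightarrow{\phi}M_{i_0}\xrightarrow{\iota}M$, so $\iota$ is at once a monomorphism and a split epimorphism, hence an isomorphism; therefore $M\simeq M_{i_0}$ is finitely generated. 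I expect this passage to a finite stage to be the main obstacle, since it is the one point where the ind-object bookkeeping — compactness of $\mathbf{1}$ and cocontinuity of $-\otimes_S M^*$ — must be handled carefully, whereas everything else is formal manipulation of the evaluation and coevaluation together with the rigidity of $\pP$.
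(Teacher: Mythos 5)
Your proposal is correct and follows essentially the same route as the paper: free modules $A\otimes S$ are dualizable with dual $S\otimes A^*$ by the free-module adjunction plus rigidity, dualizables are closed under summands, projectivity follows from $\Hom_S(M,-)\simeq\Hom_\pP(\mathbf{1},-\otimes_S M^*)$ together with semisimplicity of $\pP$, and finite generation comes from factoring $i_M$ through a finitely generated submodule via compactness of $\mathbf{1}$ and applying the zigzag identity. The only cosmetic difference is that the paper stops at the induced surjection $N\to M$ rather than upgrading the inclusion to an isomorphism, but either conclusion gives finite generation.
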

\begin{proof}
First note that for every $P, A \in \pP$ and left $S$-module $N$, we have: 
\begin{multline*}
\Hom_S(A\otimes (P\otimes S) , N ) \simeq \Hom(A\otimes P, N) \simeq \Hom(A,N\otimes P^*) \\ \simeq \Hom(A, N\otimes_S(S\otimes P^*)).
\end{multline*}
So $P\otimes S$ has $S\otimes P^*$ as its dual and is dualizable. Also every finitely generated projective $S$-module is a direct summand of $P\otimes S$ for some $P\in \pP$, and has a dual which is a direct summand of $S\otimes P^*$.

Now let $M$ be a dualizable right $S$-module with dual $(M^*,e_M,i_M)$. Then if we consider the morphism $i_M$, compactness of $\bf 1$ implies the existence of a finitely generated submodule $N\subset M$ such that $i_M$ factors through $N\otimes_S M^*$. Then the first axiom for $(e_M,i_M)$ shows that there is a surjective morphism $N\to M$ and $M$ is also finitely generated.

On the other hand, by the definition of the dual module, we have:
$$\Hom_S(M,-) \simeq \Hom_S({\bf 1}\otimes M,-) \simeq \Hom({\bf 1}, -\otimes_S M^*). $$
But $\pP$ is semisimple and $\Hom(-,-)$ is exact in each variable. So $\Hom_S(M,-)$ is right exact and $M$ is projective.

\end{proof}

Every  $k$-linear additive category  $\mM$ has a canonical structure of a module category over $\rm{Vect}_k$. We denote this structure by $(V,M)\mapsto V\bp M$, (for $V\in \rm{Vect}_k, M\in \mM$) which is defined by the property:
$$\Hom_\mM(M_1,V\bp M_2) \simeq V\otimes_k \Hom_\mM(M_1,M_2).$$
Also if $V$ is infinite dimensional, $V\otimes_k M$ can be defined by the inductive limit of $V'\otimes_k M$ over finite dimensional subspaces $V'$ of $V$, and it is an object in $\rm{ind}(\mM)$.

The following lemma is a restatement of Yoneda's lemma:
\begin{lem}\label{Yoneda}
Let $\cal{C}$ and $\cal{D}$ be two $k$-linear categories and 
$F:\cal{C}\to \cal{D}, G:\cal{C}^{\rm{op}}\to \cal{D}$
be two $k$-linear additive functors. Then for every $A\in \rm{ind}(cal{C})$ there are two natural isomorphisms:
\begin{eqnarray*}
\int^{X\in \cal{C}} \Hom_\cal{C}(X,A)\bp F(X) &\simeq & F(A), \\
\int^{X\in \cal{C}} \Hom_\cal{C}(A,X)\bp G(X) &\simeq & G(A).
\end{eqnarray*} 
\end{lem}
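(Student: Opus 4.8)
The plan is to prove the first isomorphism by the \emph{co-Yoneda} (density) argument, namely to show that both sides corepresent the same functor of $Z\in\cal{D}$ and then to invoke the ordinary Yoneda lemma in $\cal{D}$; the second isomorphism is the first one transported to the opposite category $\cal{C}^{\rm{op}}$ together with the covariant functor $G:\cal{C}^{\rm{op}}\to\cal{D}$ (using $\Hom_\cal{C}(A,X)=\Hom_{\cal{C}^{\rm{op}}}(X,A)$), so it suffices to treat the first.

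Fix $Z\in\cal{D}$ and compute $\Hom_\cal{D}\!\big(\int^{X}\Hom_\cal{C}(X,A)\bp F(X),Z\big)$. Since $\Hom_\cal{D}(-,Z)$ carries the defining colimit of the coend to the corresponding limit, this is the end $\int_{X}\Hom_\cal{D}\big(\Hom_\cal{C}(X,A)\bp F(X),Z\big)$. I would next record the copower adjunction for the $\rm{Vect}_k$-action, $\Hom_\cal{D}(V\bp M,Z)\simeq\Hom_k\big(V,\Hom_\cal{D}(M,Z)\big)$, which follows from the defining property $\Hom(M_1,V\bp M_2)\simeq V\otimes_k\Hom(M_1,M_2)$ by identifying $V\bp M\simeq M^{\oplus\dim V}$ for finite-dimensional $V$ and then passing to the filtered colimit over finite-dimensional subspaces of $V$ (using that $\Hom_k(-,W)$ sends colimits to limits). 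Taking $V=\Hom_\cal{C}(X,A)$ and $M=F(X)$ rewrites the integrand as $\Hom_k\big(\Hom_\cal{C}(X,A),\Hom_\cal{D}(F(X),Z)\big)$.

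The decisive step is to recognize the resulting end: for the two presheaves $\Hom_\cal{C}(-,A)$ and $\Hom_\cal{D}(F(-),Z)$ (both functors $\cal{C}^{\rm{op}}\to\rm{Vect}_k$), the end $\int_{X}\Hom_k\big(\Hom_\cal{C}(X,A),\Hom_\cal{D}(F(X),Z)\big)$ is by definition the space of natural transformations $\mathrm{Nat}\big(\Hom_\cal{C}(-,A),\Hom_\cal{D}(F(-),Z)\big)$. Applying the Yoneda lemma to the presheaf $P=\Hom_\cal{D}(F(-),Z)$ gives $\mathrm{Nat}\big(\Hom_\cal{C}(-,A),P\big)\simeq P(A)=\Hom_\cal{D}(F(A),Z)$. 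Assembling these identifications produces an isomorphism $\Hom_\cal{D}\!\big(\int^{X}\Hom_\cal{C}(X,A)\bp F(X),Z\big)\simeq\Hom_\cal{D}(F(A),Z)$ natural in $Z$, whence a final application of Yoneda in $\cal{D}$ identifies the two objects. The second formula is proved analogously after passing to $\cal{C}^{\rm{op}}$, where the same computation ends instead with the \emph{covariant} Yoneda lemma $\mathrm{Nat}\big(\Hom_\cal{C}(A,-),Q\big)\simeq Q(A)$, the roles of limits and colimits being interchanged throughout.

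The one place that needs genuine care — and which I expect to be the main obstacle — is the passage to ind-objects $A\in\rm{ind}(\cal{C})$, since then $\Hom_\cal{C}(X,A)$ is typically infinite dimensional and the coend a priori lives in $\rm{ind}(\cal{D})$. For the first formula this reduces cleanly: writing $A=\mathrm{colim}_i A_i$ as a filtered colimit of objects of $\cal{C}$, one has $\Hom_\cal{C}(X,A)\simeq\mathrm{colim}_i\Hom_\cal{C}(X,A_i)$ and $F(A)\simeq\mathrm{colim}_i F(A_i)$, and since the copower $\bp$ and the coend each commute with filtered colimits, the ind-case follows from the already-established case $A_i\in\cal{C}$. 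The remaining point is only to confirm that $\rm{ind}(\cal{D})$ admits the coequalizers defining the coend — automatic for an ind-category — so that the objects in question exist; everything else is formal (co)end bookkeeping.
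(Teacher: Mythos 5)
Your proof is correct and takes essentially the same route as the paper's: both compute $\Hom_{\cal{D}}$ out of the coend, identify the resulting end with natural transformations $\Hom_\cal{C}(-,A)\to\Hom_\cal{D}(F(-),Z)$, and conclude by two applications of the Yoneda lemma. You merely make explicit the copower adjunction and the reduction of the ind-object case to filtered colimits, details the paper leaves implicit.
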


(Our notation for  coend is taken from Mac Lane, \cite{M}, IX.6)
\begin{proof}
Let $B$ be an arbitrary object in $\cal{D}$. Giving a morphism 
$$\int^{\cal{C}} \Hom_\cal{C}(X,A)\bp F(X)\to B$$ 
is equivalent to give a natural morphism $\Hom_\cal{C}(-,A) \to \Hom_\cal{D}(F(-),B)$, which by Yoneda's lemma corresponds to an element in $\Hom_\cal{D}(F(A),B)$. So another use of Yoneda's lemma implies the first isomorphism. The second isomorphism is similar.
\end{proof}

\begin{defn}
Let $S$ be an algebra in $\pP$. A \emph{coalgebroid} $L$ over $S$ is a $(S,S)$-bimodule with two $(S,S)$-bimodule morphisms:
$$\Delta: L\to L\otimes_S L, \quad \epsilon: L\to S,$$
such that
$$(\Delta \otimes id)\circ \Delta = (id \otimes \Delta)\circ \Delta, \quad id = (\epsilon \otimes id)\circ \Delta = (id\otimes\epsilon)\circ \Delta. $$
A comodule over $L$ is a right module $M$ over $S$ with a morphism of $S$-modules:
$$\rho:M\to M\otimes_S L,$$
such that
$$(1\otimes \epsilon)\circ \rho = 1,\quad (\rho\otimes 1)\circ\rho = (1\otimes \Delta)\otimes \rho.$$
The category of comodules over $L$ which are finitely generated over $S$ is denoted by $\rm{coMod}(L)$.
\end{defn}


\begin{defn}
Let $\mM$ be a $\pP$-module category and $\omega$ a $\pP$-module functor from $\mM$ {to the category of dualizable right modules over a ring $S$ in $\pP$}. Then we define:
$$L(\omega) := \int^{M\in \mM} \Hom_S(\omega(M),S)\bp \omega(M). $$
We will show that $L(\omega)$ has a natural structure of a coalgebroid over $S$ and we call it  the \emph{ associated coalgebroid} of $\omega$.
\end{defn}

$L(\omega)$ has an evident right $S$-module structure. It has also a natural left $S$-module structure. For showing this, first note that by Lemma \ref{Yoneda}, $S$ can be written as follows:
$$S\simeq \int^{P\in \pP} \Hom(P,S)\bp P.$$
Now the action of a general term in this coend on the general term in the coend defining $L(\omega)$ is given by:
\begin{multline*}
(\Hom(P,S)\bp P)\otimes (\Hom_S(\omega(M),S)\bp \omega(M)) \simeq \\ \quad \left(\Hom(P,S)\otimes_k \Hom_S(\omega(M),S)\right) \bp \omega(P\otimes M) \longrightarrow \\
\qquad\qquad \Hom_S(P\otimes \omega(M), S\otimes S) \bp \omega(P\otimes M) 
\longrightarrow \\ \Hom_S(P\otimes \omega(M), S) \bp \omega(P\otimes M),
\end{multline*}
where the first arrow is simply the product of morphisms and the second is induced by the multiplication morphisms of $S$. it is easy to see that the above action induce a well defined action of coends which gives a left $S$-module structure on $L(\omega)$.

\begin{prop}\label{ass-coalg}
Let $\mM$ be a $\pP$ module category and {$\omega: \mM\to \rm{Mod}(S)$} be a $\pP$-module functor such that for any $M\in \mM$, $\omega(M)$ is dualizable. Then there is a universal $\pP$-modue morphism $\nu: \omega\to \omega\otimes_S L(\omega)$ which induces a natural isomorphism 
$$\Hom_{(S,S)}(L(\omega),L) \simeq \Hom_{\pP}(\omega,\omega\otimes_S L),$$
for any $(S,S)$-bimodule $L$.

The morphisms corresponding to $id: \omega\to \omega\otimes_S S$ and $(\nu\otimes id)\circ\nu : \omega\to \omega\otimes_S(L(\omega)\otimes_S L(\omega))$ give two morphisms
$$\epsilon: L(\omega)\to S, \; \Delta: L(\omega)\to L(\omega)\otimes_S L(\omega),$$
which induce a structure of a $S$-coalgebroid on $L(\omega)$. Furthermore for every $M$, $\nu_M$ is a comodule structure on $\omega(M)$ which induces a functor $\tilde{\omega}$ that makes the following diagram commutative: ($F$ is the forgetful functor)
$$
\xymatrix{
\mM \ar[r]^(.35){\tilde{\omega}}\ar[dr]_{\omega}& \rm{coMod}(L(\omega))\ar[d]^{F}\\ &\rm{Mod}(S)
}
$$
\end{prop}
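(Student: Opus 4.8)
The entire proposition turns on the first assertion — that $L(\omega)$ corepresents the functor $L\mapsto\Hom_\pP(\omega,\omega\otimes_S L)$ on $(S,S)$-bimodules. Once this adjunction is in place, the coalgebroid structure, the comodule structures $\nu_M$, and the lift $\tilde{\omega}$ are all forced by the uniqueness half of the universal property, so the plan is to concentrate on the adjunction and then read everything else off formally. To build the correspondence in the easy direction, I would start from a $\pP$-module morphism $\nu:\omega\to\omega\otimes_S L$ and, for each object $M$ and each functional $\xi\in\Hom_S(\omega(M),S)$, form the right $S$-module map $(\xi\otimes_S\mathrm{id}_L)\circ\nu_M:\omega(M)\to S\otimes_S L=L$. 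This is $k$-linear in $\xi$, hence defines a right $S$-module map $\Hom_S(\omega(M),S)\bp\omega(M)\to L$ out of the $M$-th term of the coend; naturality of $\nu$ in $M$ makes this family a cowedge, so it descends to a single right $S$-module map $\phi:L(\omega)\to L$.

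For the converse and for bijectivity I would invert this construction using dualizability: since each $\omega(M)$ is dualizable (Lemma \ref{proj-dual}), the coevaluation $i_{\omega(M)}:{\bf 1}\to\omega(M)\otimes_S\omega(M)^*$ lets one recover $\nu_M$ from $\phi$ precomposed with the coprojection $\Hom_S(\omega(M),S)\bp\omega(M)\to L(\omega)$, and the objectwise content is exactly the transposition $\Hom_S(\omega(M),\omega(M)\otimes_S L)\simeq\Hom_{(S,S)}(\omega(M)^*\otimes\omega(M),L)$ furnished by part (ii) of the remark following the definition of dualizable modules. Conceptually this identifies $\int^{M}\omega(M)^*\otimes\omega(M)$ as the object corepresenting all additive natural transformations $\omega\to\omega\otimes_S L$, and exhibits $L(\omega)$ (built from the copowers $\Hom_S(\omega(M),S)\bp\omega(M)$) as a canonical quotient of it; the computation reducing the dual form to the copower form uses the module constraints $c_{X,M}$ of $\omega$ together with Lemma \ref{Yoneda} and $S\simeq\int^{P}\Hom(P,S)\bp P$.

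The crux — and the step I expect to be the main obstacle — is to show that the extra relations cutting $L(\omega)$ out of this larger coend are precisely the ones imposing the $\pP$-module condition: concretely, that $\phi:L(\omega)\to L$ is \emph{left} $S$-linear for the left action defined just after the definition of $L(\omega)$ if and only if the associated $\nu$ is a morphism of $\pP$-module functors. I would prove this by unwinding that left action on a term $\Hom_S(\omega(M),S)\bp\omega(M)$ in terms of the multiplication of $S$ and the isomorphisms $\omega(P\otimes M)\simeq P\otimes\omega(M)$, and comparing the resulting identity termwise with the commuting square that defines a morphism of $\pP$-module functors; the two conditions should match after transposing through the coevaluation. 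This is the only place where the $\pP$-module hypothesis (rather than mere additive naturality) is used, and it is genuinely needed, in agreement with Remark \ref{module-enriched}.

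With the adjunction established, the universal morphism $\nu:\omega\to\omega\otimes_S L(\omega)$ is the transpose of $\mathrm{id}_{L(\omega)}$. I would then define $\epsilon:L(\omega)\to S$ and $\Delta:L(\omega)\to L(\omega)\otimes_S L(\omega)$ as the bimodule maps corresponding under the adjunction to $\mathrm{id}:\omega\to\omega\otimes_S S$ and to $(\nu\otimes\mathrm{id})\circ\nu:\omega\to\omega\otimes_S(L(\omega)\otimes_S L(\omega))$. Coassociativity and the counit axioms then reduce, by the uniqueness in the universal property, to the equalities of the $\pP$-module morphisms they represent, which hold because composing $\nu$ with itself is associative and because $\omega\otimes_S S\simeq\omega$. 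The same objectwise uniqueness shows each $\nu_M$ satisfies the comodule axioms, giving the lift $\tilde{\omega}:\mM\to\coMod(L(\omega))$; since by construction the underlying $S$-module of $\tilde{\omega}(M)$ is $\omega(M)$, the triangle $F\circ\tilde{\omega}=\omega$ commutes on the nose.
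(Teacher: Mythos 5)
Your overall architecture matches the paper's: establish the corepresentation $\Hom_{(S,S)}(L(\omega),L)\simeq\Hom_\pP(\omega,\omega\otimes_S L)$, then obtain $\epsilon$, $\Delta$, the coalgebroid axioms, the comodule structures $\nu_M$ and the lift $\tilde{\omega}$ purely formally from uniqueness; your forward construction $\nu\mapsto\phi$ and your final paragraph are fine. The gap is in the inverse direction, and in what you identify as the crux. First, the termwise recovery of $\nu_M$ from $\phi$ does not typecheck: $\phi\circ\iota_M$ (with $\iota_M$ the coprojection of the $M$-th term) is a map out of $\Hom_S(\omega(M),S)\bp\omega(M)$, i.e.\ by the copower adjunction a linear map $\Hom_S(\omega(M),S)\to\Hom_S(\omega(M),L)$, so it only records the composites of $\nu_M$ with functionals $\omega(M)\to S$. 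Since the canonical map $\Hom_S(\omega(M),S)\bp{\bf 1}\to\omega(M)^*$ is in general neither injective nor surjective (it captures only the ${\bf 1}$-isotypic part of $\omega(M)^*$), pairing with the coevaluation does not reconstruct $\nu_M\in\Hom_S(\omega(M),\omega(M)\otimes_S L)\simeq\Hom_{(S,S)}(\omega(M)^*\otimes\omega(M),L)$. Second, the dichotomy you propose --- $\phi$ is left $S$-linear if and only if $\nu$ is a $\pP$-module morphism --- cannot be the mechanism: already for $S={\bf 1}$ every morphism of ind-objects is automatically a morphism of $({\bf 1},{\bf 1})$-bimodules (unitality forces both actions to be the unit constraints), so left $S$-linearity is vacuous, while the $\pP$-module condition on $\nu$ is certainly not.

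What actually closes the gap, and is the substance of the paper's proof, is that the functionals on $\omega(M)$ missing from the $M$-th term of $L(\omega)$ are supplied by the terms indexed by $A\otimes M$ for $A\in\pP$: using Lemma \ref{Yoneda} to write $\omega(M)^*\simeq\int^{A\in\pP}\Hom(A,\omega(M)^*)\bp A$ and the module constraint $c_{A,M}$ to identify $\Hom(A,\omega(M)^*)\bp A\otimes\omega(M)$ with $\Hom_S(\omega(A\otimes M),S)\bp\omega(A\otimes M)$, one builds mutually inverse bimodule maps between $L(\omega)$ and the coequalizer $\Lambda$ of the two maps on $\int^{M}\omega(M)^*\otimes\omega(M)$ that impose the module compatibility; $\Lambda$ corepresents $\Hom_\pP(\omega,\omega\otimes_S-)$ termwise by dualizability, and the real theorem is $\Lambda\simeq L(\omega)$. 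This is where the $\pP$-module hypothesis enters essentially --- not through a linearity condition on $\phi$ but through the dinaturality relations of the coend over $\mM$ applied at the objects $A\otimes M$. Without this identification your argument does not establish the universal property, and everything downstream depends on it.
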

\begin{proof}
Let $L$ be a $(S,S)$-bimodule and $\alpha: \omega\to \omega\otimes_S L$ be a $\pP$-module morphism. Then for every $M\in \mM$, we have a natural morphism of right $S$-modules: $\alpha_M: \omega(M)\to \omega(M)\otimes_S L$, which induces a $(S,S)$-bimodule morphism:
$$j_M: \omega(M)^*\otimes \omega(M) \to L.$$
On the other hand, $\alpha$ is $\pP$-module which means that for every $M\in \mM$ and $ P\in \pP$, the following diagram is commutative: (where $c$ is the $\pP$-module constraint of $\omega$)
$$\xymatrix{
\omega(P\otimes M)\ar[r]^(.45){\alpha_{P\otimes M}}\ar[d]_{c_{P,M}}& \omega(P\otimes M)\otimes_S L \ar[d]^{c_{P,M}\otimes id}\\
P\otimes \omega(M)\ar[r]^(.45){id \otimes \alpha_{M}}& P\otimes \omega(M)\otimes_S L
}
$$
which in turn is equivalent to the commutativity of the following diagram:
$$
\xymatrix{\omega(M)^*\otimes P^*\otimes P\otimes \omega(M) \ar[rr]^(.58){id\otimes e_P\otimes id }\ar[d]_{c_{P,M}^*\otimes c_{P,M}^{-1}} && \omega(M)^*\otimes\omega(M) \ar[d]^{j_M}\\
\omega(P\otimes M)^*\otimes \omega(P\otimes M) \ar[rr]^(.6){j_{P\otimes M}} && L}
$$
This puts another condition on the morphisms $j_M$ besides naturality. 

Now let us define $\lambda: \int^{ \mM} \omega(M)^*\otimes \omega(M)\to\Lambda$ to be the coequalizer of the following diagram:
\begin{equation}\label{eqn1}
 \int^{P\in\pP} \int^{M\in\mM} \omega(M)^*\otimes P^* \otimes P \otimes \omega(M) \xymatrix{\ar@<.4ex>[r]^(.5)\phi\ar@<-.4ex>[r]_(.5)\psi & }\int^{M\in \mM} \omega(M)^*\otimes \omega(M),   
\end{equation}
where the morphisms $\phi$ and $\psi$ are induced by the following morphisms respectively:
$$\omega(M)^*\otimes P^* \otimes P \otimes \omega(M) \xrightarrow{id\otimes e_P\otimes id} \omega(M)^*\otimes \omega(M),$$
$$\omega(M)^*\otimes P^* \otimes P \otimes \omega(M) \xrightarrow{c_{P,M}^*\otimes c_{P,M}^{-1}} \omega(P\otimes M)^*\otimes \omega(P\otimes M).$$
Then $\Lambda$ is a $(S,S)$-bimodule and the structure morphisms $i_M:\omega(M)^*\otimes \omega(M)\to \Lambda$ induce a $\pP$-module morphism $\mu:\omega\to \omega\otimes_S \Lambda$. Furthermore $\mu$ is universal and $\Lambda$ represents the functor $ \Hom_\pP(\omega,\omega\otimes_S -)$. Thus $\Lambda$ also satisfies all of the claimed properties of $L(\omega)$ and it only remains to show that $\Lambda \simeq L(\omega)$. 

For doing this we construct an inverse pair of $(S,S)$-bimodule morphisms $f:L(\omega)\to \Lambda, \, g:\Lambda\to L(\omega)$.

$f$ is defined by the composition of following morphisms:
\begin{eqnarray*}
L(\omega) & =& \int^\mM \Hom_S(\omega(M),S)\bp \omega(M) \\ & \simeq & \int^\mM \left(\Hom({\bf 1}, \omega(M)^*)\bp {\bf 1} \right) \otimes \omega(M)\\
&\longrightarrow & \int^\mM \omega(M)^*\otimes \omega(M) \xrightarrow{\lambda} \Lambda.
\end{eqnarray*}

Now consider the following sequence of morphisms: (The first isomorphism is by Lemma \ref{Yoneda} and the third is induced by $c_{A,M}$)
\begin{eqnarray*}
\omega(M)^*\otimes \omega(M) &\simeq &\int^{A\in\pP} \Hom(A, \omega(M)^*) \bp A \otimes \omega(M) \\
&\simeq & \int^{A\in \pP} \Hom_S(A\otimes \omega(M),S) \bp A \otimes \omega(M)\\
&\simeq& \int^{A\in \pP} \Hom_S(\omega(A\otimes M),S) \bp \omega(A \otimes M) \\
&\longrightarrow & \int^{X\in\mM} \Hom_S(\omega(X),S)\bp \omega(X) = L(\omega).
\end{eqnarray*}

If we denote the composition of above morphisms by $k_M$, then it is easy to see that $k_M$'s induces a morphism $\tilde{g}:\int^\mM \omega(M)^*\otimes \omega(M)\to L(\omega)$ and $\tilde{g}\circ \phi = \tilde{g}\circ \psi$, so $\tilde{g}$ induces a morphism $g:\Lambda\to L(\omega)$. 

it is also straightforward to check that $f$ and $g$ are bimodule morphisms, $f\circ g = id$ and $g\circ f = id$ which finishes the proof.

\end{proof}

By a complete similar proof, one can show that with the conditions of the above proposition,$\int^{M\in \mM} \Hom_S(S,\omega(M))\bp \omega(M)^*$ also satisfies the universal propery of $L(\omega)$ and this gives another formula for the associated coalgebroid of $\omega$:
\begin{equation}\label{second-form}
L(\omega) \simeq \int^{M\in \mM} \Hom_S(S,\omega(M))\bp \omega(M)^*.
\end{equation}

\begin{rem}
In this subsection the semisimplicity of $\pP$ is only used in Lemma \ref{proj-dual} for the characterization of dualizable objects.
\end{rem}

\subsection{Framed objects}
In this subsection we give a general definition of framed objects. 
Our motivation  is
the framed mixed Hodge structures considered first by Beilinson et al. in \cite{BGSV}.
\begin{defn}\label{framed}
Let $F: \cal{C}\to \cal{D}$ be a functor from a category $\cal{C}$ to a $k$-linear category $\cal{D}$. The functor of \emph{framed objects}, 
$$\cal{H}: \cal{D}^{\rm{op}}\times \cal{D}\to \rm{ind}(\rm{Vect}_k)$$
is defined by
$$\cal{H}(A,B)= \int^{X\in \cal{C}} \Hom_\cal{D}(A,F(X))\otimes_k \Hom_\cal{D}(F(X),B).$$

\end{defn}

i.e. informally one thinks of elements of $\cal{H}(A,B)$ as an object $X$ of $\cC$ together
with framings $\nu : A\to F(X)$ and $\hat{\nu} :  F(X)\to B$. The commutativity diagram
for defining the coend means that two such framed objects $(X,\nu,\hat{\nu})$ and $(X',\nu',\hat{\nu}')$ should be considered
equivalent if there is a morphism $X\to X'$ that respects the frames.

\begin{prop}\label{framed-coalg}
For a $\pP$-module category $\mM$ and a $\pP$-module functor $\omega:\mM\to \rm{Mod}(S)$, we have:
$$L(\omega) \simeq \int^{A\in\rm{Mod}(S)} \cal{H}(A,S)\bp A.$$
Also if  $S$ is semisimple (i.e. $\Mod(S)$ is a semisimple category),  then
$$L(\omega) \simeq \int^{A\in\rm{Mod}(S)} \cal{H}(S,A)\bp A^*.$$
\end{prop}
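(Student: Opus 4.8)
The plan is to compare the definition
$$L(\omega) = \int^{M\in\mM} \Hom_S(\omega(M),S)\bp\omega(M)$$
with the two claimed coends by rewriting the framed-object functor $\cal{H}$ and using the Yoneda-type identities of Lemma \ref{Yoneda} to collapse iterated coends. First I would unwind the definition of $\cal{H}$ applied to the functor $\omega:\mM\to\rm{Mod}(S)$. Since $\omega$ is a module functor into $\rm{Mod}(S)$, the relevant $\Hom$-spaces are $\Hom_S$, so
$$\cal{H}(A,S) = \int^{M\in\mM} \Hom_S(A,\omega(M))\otimes_k \Hom_S(\omega(M),S).$$
Substituting this into the right-hand side of the first claim gives an iterated coend over $A\in\rm{Mod}(S)$ and $M\in\mM$, and the goal is to perform the $A$-integration first.

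For the first formula, the key step is to apply Lemma \ref{Yoneda} in the variable $A$. Fixing $M$, the functor $A\mapsto \Hom_S(A,\omega(M))$ is contravariant in $A$, so the coend $\int^{A} \Hom_S(A,\omega(M))\bp A$ should recover $\omega(M)$ by the second isomorphism of Lemma \ref{Yoneda} (applied in $\rm{Mod}(S)$, viewed as a $k$-linear category with $G=\omega(M)$ the object being reconstructed, i.e. $G(A)=\Hom_S(A,\omega(M))$ as a representable functor). After interchanging the two coends (justified because coends commute with coends) and carrying out the $A$-integral, the outer $M$-coend becomes
$$\int^{M\in\mM} \Hom_S(\omega(M),S)\bp \omega(M),$$
which is exactly $L(\omega)$. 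So the first isomorphism follows essentially formally from Lemma \ref{Yoneda} plus Fubini for coends.

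For the second formula I would proceed analogously but start from the alternative expression \eqref{second-form}, namely $L(\omega)\simeq \int^{M}\Hom_S(S,\omega(M))\bp\omega(M)^*$. Writing out $\cal{H}(S,A)=\int^{M}\Hom_S(S,\omega(M))\otimes_k\Hom_S(\omega(M),A)$ and substituting into $\int^{A}\cal{H}(S,A)\bp A^*$, I would again integrate over $A$ first. The piece to evaluate is $\int^{A}\Hom_S(\omega(M),A)\bp A^*$, and here is where the semisimplicity of $S$ enters: I expect this to be the main obstacle, since taking duals $A\mapsto A^*$ is only well-behaved on \emph{dualizable} (equivalently, by Lemma \ref{proj-dual}, finitely generated projective) modules, and when $\rm{Mod}(S)$ is semisimple every object is projective, so the duality is an equivalence $\rm{Mod}(S)\simeq \rm{Mod}(S)^{\rm{op}}$ on objects. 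Under this equivalence the coend $\int^{A}\Hom_S(\omega(M),A)\bp A^*$ should be identified, via Lemma \ref{Yoneda} applied to the functor $A^*\mapsto\cdots$ together with the natural isomorphism $\Hom_S(\omega(M),A)\simeq\Hom_S(A^*,\omega(M)^*)$, with $\omega(M)^*$. Carrying out this $A$-integration reduces the right-hand side to $\int^{M}\Hom_S(S,\omega(M))\bp\omega(M)^*$, which is $L(\omega)$ by \eqref{second-form}. The delicate points I would be careful about are: (a) checking that the change of variables $A\leadsto A^*$ is a genuine equivalence of the indexing category (needing semisimplicity so that all objects are dualizable and $A\mapsto A^{**}\simeq A$), and (b) verifying that the resulting isomorphisms are $(S,S)$-bimodule maps and not merely right-$S$-module maps, though since all identifications come from the universal property in Proposition \ref{ass-coalg} this bimodule compatibility should be automatic.
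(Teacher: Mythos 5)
Your argument is essentially the paper's own: expand $\cal{H}$ via its coend definition, interchange the two coends using exactness of $\bp$, and collapse the $A$-coend by Lemma \ref{Yoneda}; for the second formula the paper likewise just invokes \eqref{second-form} together with semisimplicity (so that every object of $\Mod(S)$ is dualizable and $A\mapsto A^*$ is a well-defined contravariant functor), which you spell out correctly. The only small slip is bookkeeping: the coend $\int^{A}\Hom_S(A,\omega(M))\bp A$ is collapsed by the \emph{first} isomorphism of Lemma \ref{Yoneda} (with $F$ the identity functor and the lemma's fixed object taken to be $\omega(M)$), not the second, which is the one you need later for $\int^{A}\Hom_S(\omega(M),A)\bp A^*$.
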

\begin{proof} For the first part:
\begin{align*}
\quad\quad &\hspace{-35pt}\int^{\rm{Mod}(S)} \cal{H}(A,S)\bp A  \\
&\simeq \int^{ \rm{Mod}(S)}\left.\int^{M\in\mM}\Hom_S(\omega(M),S)\otimes_k \Hom_S(A,\omega(M))\right. \bp A \\
&\simeq \int^\mM \Hom_S(\omega(M),S)\bp \left(\int^{\rm{Mod}(S)}\Hom_S(A,\omega(M))\bp A\right)\\
&\simeq \int^\mM \Hom_S(\omega(M),S)\bp \omega(M) \;=\; L(\omega).
\end{align*}
The second isomorphism is valid by the exactness of $\bp$ and the third by Lemma$\,$\ref{Yoneda}. 

For the second part, because  every object in $\Mod(S)$ is semisimple, every object is dualizable and the claimed isomorphism can obtained similarly using \eqref{second-form}.
\end{proof}

\begin{cor}
With the notations of the previous proposition, if $S$ is semisimple and $\cal{S}$ be a set of representatives for the equivalence classes of simple objects in $\Mod(S)$, then
$$L(\omega) \simeq  \bigoplus_{A\in \cal{S}} \cal{H}(A,S)\otimes_{\rm{End}(A)} A\simeq  \bigoplus_{A\in \cal{S}} \cal{H}(S,A)\otimes_{\rm{End}(A)} A^*,$$
where $ \cal{H}(A,S)\otimes_{\rm{End}(A)} A$ is defined by
$$ \cal{H}(A,S)\otimes_{\rm{End}(A)} A:= \int^{\{A\}} \cal{H}(A,S)\bp A,$$
where we take the coend over the full subcategory with single object $A$. 
\\ $\cal{H}(S,A) \otimes_{\rm{End}(A)} A^*$ is defined similarly.

\end{cor}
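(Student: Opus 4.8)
The plan is to deduce the Corollary from the two coend presentations of $L(\omega)$ furnished by Proposition \ref{framed-coalg}, namely $L(\omega)\simeq\int^{A\in\Mod(S)}\cal{H}(A,S)\bp A$ and, when $\Mod(S)$ is semisimple, $L(\omega)\simeq\int^{A\in\Mod(S)}\cal{H}(S,A)\bp A^*$, by isolating a single general fact about coends over semisimple categories: for a $k$-linear semisimple category $\cal{A}$ with finite-dimensional $\Hom$-spaces and finite-length objects, and a $k$-bilinear bifunctor $T:\cal{A}^{\rm{op}}\times\cal{A}\to\cal{E}$ into a $k$-linear category admitting the relevant colimits, one has $\int^{X\in\cal{A}}T(X,X)\simeq\bigoplus_{A\in\cal{S}}\int^{\{A\}}T(A,A)$, the sum running over a set $\cal{S}$ of representatives of the simple objects. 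Granting this, the first isomorphism is the case $T(X,Y)=\cal{H}(X,S)\bp Y$ and the second the case $T(X,Y)=\cal{H}(S,Y)\bp X^*$, since in each case $\int^{\{A\}}T(A,A)$ is, by the definition given in the statement, exactly $\cal{H}(A,S)\otimes_{\rm{End}(A)}A$, respectively $\cal{H}(S,A)\otimes_{\rm{End}(A)}A^*$; the two resulting direct sums are then isomorphic through their common value $L(\omega)$. Note that the second presentation requires $S$ semisimple so that every object of $\Mod(S)$, and hence every $A^*$, is dualizable, which is precisely the standing hypothesis.

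To prove the general fact I would work through the universal property of the coend, identifying dinatural families out of $T$ with families of $\rm{End}(A)$-coinvariant maps indexed by $\cal{S}$. Fix $E\in\cal{E}$. A dinatural transformation $\beta:T\Rightarrow E$ restricts at each simple $A$ to a map $\beta_A:T(A,A)\to E$ which, by dinaturality applied to endomorphisms $g\in\rm{End}(A)$, satisfies $\beta_A\circ T(g,\rm{id})=\beta_A\circ T(\rm{id},g)$, i.e. is $\rm{End}(A)$-coinvariant. Conversely, since $\cal{A}$ is semisimple with finite-length objects, every $X$ is a finite direct sum of simples, and biadditivity of $T$ decomposes $T(X,X)$ into summands $T(A_i,A_j)$ indexed by the copies. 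Applying dinaturality to the structure maps of an isotypic decomposition of $X$ (inclusions $\iota':A_j\to X$ and projections $\pi:X\to A_i$), the restriction of any $\beta_X$ to the summand $T(A_i,A_j)$ factors as $\beta_{A_i}\circ T(\rm{id},\pi\circ\iota')$ with $\pi\circ\iota':A_j\to A_i$; this vanishes for $i\neq j$ because $\Hom(A_i,A_j)=0$, and for $i=j$ is determined by the coinvariant map $\beta_{A_i}$. Hence $\beta$ is both determined by and reconstructible from the coinvariant family $\{\beta_A\}_{A\in\cal{S}}$.

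Consequently $\rm{Dinat}(T,E)$ is naturally isomorphic to $\prod_{A\in\cal{S}}\Hom_{\cal{E}}\big(\int^{\{A\}}T(A,A),E\big)=\Hom_{\cal{E}}\big(\bigoplus_{A\in\cal{S}}\int^{\{A\}}T(A,A),E\big)$, and the defining universal property of the coend (via Yoneda) yields $\int^{X}T(X,X)\simeq\bigoplus_{A\in\cal{S}}\int^{\{A\}}T(A,A)$; everything here is compatible with the ambient $(S,S)$-bimodule and ind-object structures, which are carried along functorially. The main obstacle I anticipate is the reconstruction step: checking that the values forced on the diagonal and off-diagonal summands genuinely assemble into a \emph{dinatural} (not merely natural) family, and confirming that the possibly infinite sum $\bigoplus_{A\in\cal{S}}$ is correctly read as an object of $\rm{ind}(\pP)$ so that the universal-property argument remains valid there. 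Both become routine once the orthogonality $\Hom(A_i,A_j)=0$ for $i\neq j$ and the splitting of idempotents in the semisimple category $\Mod(S)$ are in hand.
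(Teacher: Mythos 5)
Your proposal is correct and follows the argument the paper leaves implicit: the corollary is stated without proof, its content being exactly that a coend over the semisimple category $\Mod(S)$ splits as a direct sum, over representatives of the simple isotypes, of the coends over the one-object full subcategories, applied to the two presentations of $L(\omega)$ from Proposition \ref{framed-coalg}. Your dinaturality analysis supplies a valid proof of that splitting (only note that the off-diagonal restriction of $\beta_X$ vanishes when $A_i\not\simeq A_j$, not merely when $i\neq j$; for distinct copies of the same isotype it is instead determined by $\beta_{A_i}$ via $\pi\circ\iota'$, which your displayed formula already accounts for).
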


This in particular gives a correct version of  theorem 3.3.(a) in \cite{G}. One should replace $\cal{A}(\bb{Q}(0),B)\boxtimes B^\vee$  with $\cal{A}(\bb{Q}(0),B)\boxtimes_{\rm{End}(B)} B^\vee$ in loc.cit. and the result is the dual of $\pP$-module endomorphims of $\Psi$, not all of endomorphisms.

\subsection{Exact faithful module functors}\label{exact-faithful-functors}

In the following  theorem we present some reasonable conditions on $\pP$-module functors $\omega$ in Proposition \ref{ass-coalg}, such that the functor $\tilde{\omega}$ in the proposition, becomes equivalence:
\begin{theorem}\label{fiber}
Let $\cal{M}$ be an { abelian} locally finite module category over $\cal{P}$ satisfying the Chevalley condition. Let $S$ be an algebra in $\pP$ and $\omega: \mM\to \rm{Mod}(S)$ be an exact faithful $\pP$-module functor and for every $M\in \mM$, $\omega(M)$ is dualizable. Then the functor $\tilde{\omega}$ in Proposition \ref{ass-coalg} is an equivalence of $\pP$-module categories between $\mM$ and $\rm{coMod}(L(\omega))$. 
\end{theorem}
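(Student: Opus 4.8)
The plan is to reduce to the finitely generated case and there identify $\tilde{\omega}$ with the comparison functor of a comonadic adjunction, using Gabber's theorem and the Morita lemma as the main inputs. First I would write $\mM$ as the filtered union of its full abelian $\pP$-module subcategories $\mM_i$ that are finitely generated over $\pP$ (for instance, for each object $X$ the category of subquotients of $\{P\otimes X : P\in\pP\}$); every object lies in some $\mM_i$, and each $\mM_i$ inherits local finiteness and the Chevalley property, while $\omega|_{\mM_i}$ stays exact, faithful, $\pP$-module, with dualizable values. Since the coend defining $L(\omega)$ is a colimit and each $\omega(M)$ is a compact object of $\pP$, it commutes with this filtered union, giving $L(\omega)\simeq\varinjlim_i L(\omega|_{\mM_i})$; and because a finitely generated comodule $V$ over $L(\omega)$ is compact, its coaction $V\to V\otimes_S L(\omega)$ factors coassociatively through some $V\otimes_S L(\omega|_{\mM_i})$, so that $\coMod(L(\omega))=\varinjlim_i\coMod(L(\omega|_{\mM_i}))$. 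It therefore suffices to prove the theorem when $\mM$ is finitely generated over $\pP$, which I assume henceforth.

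By Theorem \ref{gGabber} I fix an equivalence of $\pP$-module categories $\mM\simeq\Mod(R)$ with $R$ a finite algebra. Then $\omega\colon\Mod(R)\to\Mod(S)$ is an exact, hence right exact, faithful $\pP$-module functor, so Lemma \ref{right-exact} supplies an $(R,S)$-bimodule $M=\omega(R)$ with $\omega\simeq-\otimes_R M$. As $\omega(R)=M$ is dualizable, Lemma \ref{proj-dual} makes $M$ finitely generated projective over $S$, its dual $M^*$ is naturally an $(S,R)$-bimodule, and dualizability gives the adjunction $\Hom_S(N\otimes_R M,W)\simeq\Hom_R(N,W\otimes_S M^*)$. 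Thus $G:=-\otimes_S M^*\colon\Mod(S)\to\Mod(R)$ is right adjoint to $\omega$.

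Next I would identify $L(\omega)$. Combining the universal property of $L(\omega)$ from Proposition \ref{ass-coalg} with the second part of Lemma \ref{right-exact} and dualizability, a $\pP$-module morphism $\omega\to\omega\otimes_S L$ corresponds to an $(R,S)$-bimodule morphism $M\to M\otimes_S L$, which corresponds to an $(S,S)$-bimodule morphism $M^*\otimes_R M\to L$; hence $L(\omega)\simeq M^*\otimes_R M$ as coalgebroids, with counit $e_M^R$ and comultiplication $\mathrm{id}_{M^*}\otimes i_M^R\otimes\mathrm{id}_M$. Under this identification the comonad $\omega G=-\otimes_S(M^*\otimes_R M)=-\otimes_S L(\omega)$ on $\Mod(S)$ has coalgebras exactly the objects of $\coMod(L(\omega))$, and since $\nu_N=\omega(\eta_N)$ is the coaction, $\tilde{\omega}$ is precisely the comparison functor of the adjunction $\omega\dashv G$.

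It then remains to show that $\omega$ is comonadic. As $\omega$ is exact and faithful between abelian categories it is conservative and preserves kernels, and $\Mod(R)$ has the relevant equalizers, so by the dual Beck comonadicity theorem $\tilde{\omega}$ is an equivalence; being a $\pP$-module functor, it is then an equivalence of $\pP$-module categories. If one prefers to avoid invoking Beck, the same can be done by hand: full faithfulness follows because $\Hom_{\coMod(L(\omega))}(\tilde{\omega}N,\tilde{\omega}N')$ is the equalizer of the two maps $\Hom_S(\omega N,\omega N')\rightrightarrows\Hom_S(\omega N,\omega N'\otimes_S L(\omega))$, which equals $\Hom_R(N,N')$ by the adjunction and faithful exactness; essential surjectivity is obtained by sending a comodule $(V,\rho)$ to the kernel $N$ of a suitable coreflexive pair $V\otimes_S M^*\rightrightarrows V\otimes_S M^*\otimes_R M\otimes_S M^*$ and checking, via exactness of $\omega$, that $\tilde{\omega}(N)\cong(V,\rho)$. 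I expect the \emph{essential surjectivity} step — manufacturing an object of $\mM$ from an abstract comodule and matching the coactions — to be the main obstacle, together with the compactness bookkeeping in the filtered-colimit reduction of the first paragraph.
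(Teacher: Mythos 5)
Your proposal is correct and follows essentially the same route as the paper's proof: reduce to the finitely generated case, apply Theorem \ref{gGabber} and Lemma \ref{right-exact} to write $\omega\simeq-\otimes_R M$ with right adjoint $-\otimes_S M^*$, identify $L(\omega)\simeq M^*\otimes_R M$ via the universal property, and conclude by Barr--Beck comonadicity. The only substantive difference is in the reduction step: where you appeal to compactness to get $\coMod(L(\omega))=\varinjlim\coMod(L(\omega|_{\langle X\rangle}))$, the paper instead proves that the transition maps $L(\omega|_{\langle X_1\rangle})\to L(\omega|_{\langle X_2\rangle})$ are injective (using Lemma \ref{submodule} to realize the inclusion as $\Mod(R/I)\subset\Mod(R)$ and the projectivity of $R/I\otimes_R M$ over $S$), which makes each $\coMod(L(\omega|_{\langle X\rangle}))$ an honest full subcategory and renders the bookkeeping you flagged as the main obstacle immediate.
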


The main ingredient of the proof is a consequence of the Barr-Beck theorem for abelian categories, which is stated in \cite{D}, 4.1:
\begin{quotation}
Let $(T,U)$ be a pair of adjoint functors $T:\cal{A}\to \cal{B}$ and $U:\cal{B}\to \cal{A}$ between abelian categories $\cal{A}$ and $\cal{B}$. Then if $T$ is exact and faithful, $T$ induces an equivalence between $\cal{A}$ and the category of objects of $\cal{B}$ equipped with a coaction of the comonad $TU$.
\end{quotation}

\begin{proof} First we assume $\mM$ is finitely generated over $\pP$. Then By Theorem \ref{gGabber}, we can assume $\mM=\rm{Mod}(R)$ for a finite algebra $R$ in $\pP$. Let $M= \omega(R)$, then by Lemma \ref{right-exact}, $M$ is a $(R,S)$-bimodule and we can assume $\omega(-) = -\otimes_R M$.

Now $M$ is dualizable over $S$ and so there is a natural isomorphism: 
$$\Hom_S(X\otimes_R M, Y) \simeq \Hom_R(X, Y\otimes_S M^*)$$
for every $X\in \Mod(R)$. So $\omega$ has a right adjoint (denoted by $\alpha$), but $\omega$ is also exact and faithful, so by a consequence of the Barr-Beck theorem, $\omega$ induces an equivalence between $\mM$ and right $S$-modules with a coaction of the comonad $\omega\circ \alpha$. 

Now if we define $L= M^*\otimes_R M$, then $\omega\circ\alpha(-) = -\otimes_S L $ and the  comultiplication and counit morphisms of $\omega\circ\alpha$ induce corresponding morphisms on $L$ which satisfy the axioms for a coalgebroid over $S$. These morphisms can be explicitly written in terms of $i_M$ and $e_M$:
$$L\stackrel{\Delta}{\longrightarrow} L\otimes_S L = M^*\otimes_R M \xrightarrow{id \otimes i^R_M \otimes id} M^*\otimes_R M\otimes_S M^*\otimes_R M,$$
$$L\stackrel{\epsilon}{\longrightarrow} S = M^*\otimes_R M \stackrel{e^R_M}{\longrightarrow} S.$$
Also a coalgebra over $\omega\circ\alpha$ corresponds to a comodule over $L$.

We claim $L\simeq L(\omega)$. Let $L'$ be an arbitrary $(S,S)$-bimodule, then by Lemma \ref{right-exact}:
\begin{eqnarray*}
\Hom_\pP(\omega, \omega\otimes_S L') &\simeq & \Hom_\pP(-\otimes_R M, -\otimes_R(M\otimes_S L'))\\
& \simeq & \Hom_{(R,S)}(M, M\otimes_S L') \\
&\simeq & \Hom_{(S,S)}(M^*\otimes_R M, L') = \Hom_{(S,S)}(L,L'). 
\end{eqnarray*}
And the universal property of $L(\omega)$ implies the claim. Therefore for finitely generated $\mM$, $\omega$ induces an equivalence between $\mM$ and comodules over $L(\omega)$.

Now consider the general case of not finitely generated $\mM$. For an object $X\in \mM$, let us denote the full subcategory of subquotients of $A\otimes X, (A\in \pP)$ by $\langle X\rangle$. Then $\langle X\rangle$'s constitute a class of finitely generated $\pP$-subcategories of $\mM$ and $\mM$ is the directed union of these subcategories. We claim $L(\omega)$ is also a directed union of $L(\omega|\langle X\rangle)$'s. 
The only thing should be checked is that for two $X_1,X_2\in \mM$ with $\langle X_1\rangle \subset \langle X_2\rangle$ the induced morphism $L(\omega|\langle X_1\rangle) \to L(\omega |\langle X_2\rangle)$ is injective. The proof is very similar to (\cite{D}, 6.1).

By Lemma \ref{submodule}, the inclusion $\langle X_1\rangle\subset \langle X_2\rangle$ is equivalent to an inclusion $\rm{Mod}(R/I)\subset \rm{Mod}(R)$. If $\omega|\langle X_2\rangle$ be isomorphic to $-\otimes_R M$, we have $\omega|\langle X_1\rangle \simeq -\otimes_{R/I} (R/I\otimes_R M)$. Therefore
\begin{eqnarray*}
L(\omega|\langle X_1\rangle)&\simeq & (R/I\otimes_R M)^* \otimes_{R/I} (R/I\otimes_R M) \\ 
&\simeq & (R/I \otimes_R M)^* \otimes_R M \hookrightarrow M^*\otimes_R M \simeq L(\omega|\langle X_2\rangle).
\end{eqnarray*}
($R/I\otimes_R M$ is projective over $S$ and so it is a direct summand of $M$) and the composition of above morphisms is the natural morphism $L(\omega|\langle X_1\rangle) \to L(\omega |\langle X_2\rangle)$.

So $L(\omega)$ is the directed union of $L(\omega|\langle X\rangle)$'s and $\rm{coMod}(L(\omega))$ is also the union of $\rm{coMod}(L(\omega|\langle X\rangle))$'s. This fact completes the proof that $\tilde{\omega}$ is an equivalence on the whole of $\mM$.
\end{proof}

\section{Monoidal module categories}\label{mon-mod}

\subsection{Tensor product of module categories}
In this subsection we construct a notion of tensor product of module categories which will be used in the proof of the main Theorem \ref{main}.

\begin{defn}
Let $(\mM,m,r)$ be a right $\pP$-module category and $(\nN,n,l)$ a left $\pP$ category and $\cC$ an arbitrary ($k$-linear) category. By a $\pP$-balanced functor $(F,b):\mM\otimes \nN\to \cC$, we mean a functor $F: \mM\otimes \nN\to \cC$ together with natural isomorphisms $b_{M,P,N}:F((M\otimes P),N) \to F(M, (P\otimes N))$ for every $M\in \mM, P\in \pP, N\in \nN$, such that the  diagrams:

$$\xymatrix{F(M\otimes(P\otimes Q), N) \ar[rr]^{b_{M,(P\otimes Q),N}} \ar[d]_{m_{M,P,Q}} && F(M,(P\otimes Q)\otimes N) \ar[d]^{n_{P,Q,N}} \\
F((M\otimes P)\otimes Q,N)\ar[dr]_{b_{M\otimes P, Q,N}} && F(M,P\otimes (Q\otimes N)) \\
&F(M\otimes P,Q\otimes N)\ar[ur]_{b_{M,P,Q\otimes N}}&}$$
and
$$\xymatrix{F(M\otimes {\bf 1}, N)\ar[rd]_{r_M}\ar[rr]^{b_{M,{\bf 1},N}} && F(M,{\bf 1}\otimes N) \ar[dl]^{l_N}\\
&F(M, N)&}$$
commute.
\end{defn}

Now for $\mM,\nN,\pP$ as in the definition, $\mM\otimes_\pP \nN$ is a ($k$-linear) category together with a universal $\pP$-balanced functor: $(G,g): \mM\otimes \nN\to \mM\otimes_\pP\nN$. Which by universality of $(G,g)$ we mean that for every $\pP$-balanced functor $(F,b):\mM\otimes\nN\to \cC$, there is a unique functor $\tilde{F}: \mM\otimes_\pP\nN\to \cC$ such that $F= \tilde{F}\circ G$ and $b = \tilde{F}(g)$.

Such a universal category always exists and can be constructed as follows: $\mM\otimes_\pP \nN$ is the $k$-linear category obtained from $\mM\otimes \nN$ by adding new invertible morphisms 
$$g_{M,P,N}: (M\otimes P, N) \to (M,P\otimes N),$$
for every $M\in \mM, N\in \nN, P\in \pP$ and relations corresponding to commutativity of the  diagrams: ($f_1:M\to M', f_2:P\to P', f_3: N\to N'$ are morphisms)

$$\xymatrix{(M\otimes P,N) \ar[rr]^{g_{M,P,N}}\ar[d]_{(f_1\otimes f_2, f_3)} && (M,P\otimes N)\ar[d]^{(f_1,f_2\otimes f_3)} \\
(M'\otimes P', N') \ar[rr]^{g_{M',P',N'}}&& (M',P'\otimes N') }$$
and
$$\xymatrix{(M\otimes(P\otimes Q), N) \ar[rr]^{g_{M,(P\otimes Q),N}} \ar[d]_{m_{M,P,Q}} && (M,(P\otimes Q)\otimes N) \ar[d]^{n_{P,Q,N}} \\
((M\otimes P)\otimes Q,N) \ar[dr]_{g_{M\otimes P, Q,N}} && (M,P\otimes (Q\otimes N)) \\
&(M\otimes P, Q\otimes N)\ar[ur]_{g_{M,P,Q\otimes N}}&}$$
and
$$\xymatrix{(M\otimes {\bf 1},N) \ar[rd]_{r_M}\ar[rr]^{g_{M,{\bf 1},N}} && (M,{\bf 1}\otimes N )\ar[dl]^{l_N}\\
& (M,N) &}$$

Now it is evident that the {extension} functor $G:\mM\otimes \nN\to \mM\otimes_\pP \nN$ together with natural isomorphism $g$ is a universal $\pP$-balanced functor. 

When $\mM$ is a $(\qQ,\pP)$-bimodule category with middle associativity constraints 
$$d_{Q,M,P}: (Q\otimes M)\otimes P\xrightarrow{\simeq} Q\otimes (M\otimes P),$$
(for the definition of bimodule categories, cf. \cite{E}, 7.1), then $\mM\otimes_\pP \nN$ is a left $\qQ$-module category. This structure is induced by the left $\qQ$-module structure of $\mM\otimes\nN$ and the action of $\qQ$ on $g_{M,P,N}$'s which is given by: ($Q\in \qQ$)
$$Q\otimes g_{M,P,N} : (Q\otimes(M\otimes P), N)\rightarrow (Q\otimes M, P\otimes N) := g_{Q\otimes M, P,N} \circ (d_{Q,M,P}^{-1},1).$$
A similar structure holds when $\nN$ is a bimodule category.

Especially in the case where $\pP$ is symmetric, every left module category over $\pP$ can also be considered as a right module category and therefore a $(\pP,\pP)$-bimodule category. So for every two left $\pP$-module categories $\mM$ and $\nN$, $\mM\otimes_\pP \nN$ has a well defined structure of a left $\pP$-module category induced by the left action of $\pP$ on $\mM$, and a right $\pP$-module structure induced by the right action of $\pP$ on $\nN$, and these two $\pP$-module structures are naturally isomorphic.

\begin{rem}
In \cite{Gre}, J. Greenough introduces a notion of ``tensor product" of $k$-linear abelian module categories over $\pP$, denoted by $\boxtimes_\pP$, which is a generalization of Deligne's construction in \cite{D} of tensor products of abelian categories. But here we only need a notion of tensor product which is a $k$-linear additive category and so our construction is simpler.
\end{rem}

\subsection{The main theorem}
In this section we assume further that $\pP$ is symmetric. 

For every $(S,S)$-bimodule $L$ over a commutative algebra $S$, we denote by $L^{op}$ the $(S,S)$-bimodule which its underlying object is equal to $L$, but the left action of $S$ on it is given by the right action of $S$ on $L$ and the right action is given by the left action of $S$ on $L$. In the case where $L$ is a coalgebroid over $S$, $L^{op}$ has also a structure of a coalgebroid with the same counit and comultiplication defined by the composition of the comultiplication of $L$ with a switch $L\otimes_S L\to L^{op}\otimes_S L^{op}$.

If we denote the product and unit morphisms of $S$ by $\tilde{m}$ and $\tilde{u}$, then there is a natural coalgebroid structure on $S\otimes S$ over $S$, with its evident bimodule structure over $S$. The counit is given by $\tilde{m}$ and the coproduct is given by:
$$id \otimes \tilde{u} \otimes id: S\otimes S\to S\otimes S\otimes S = (S\otimes S)\otimes_S (S\otimes S).$$
\begin{defn}
Let $(S,\tilde{m},\tilde{u})$ be a commutative algebra in $\pP$, a commutative \emph{Hopf algebroid} over $S$, is a coalgebroid $(L,\Delta,\epsilon)$ over $S$ equipped with unit and multiplication morphisms of coalgebroids over $S$:
$$u: S\otimes S\to L, \quad m: L\otimes_{S\otimes S} L\to L,$$
such that $m$ is associative and commutative and $u$ is a unit for $m$. Furthermore there is a coalgebroid morphism
$$T: L^{op}\to L^{},$$
called antipode with the following property:
\begin{multline*}
L \xrightarrow{\Delta}L\otimes_S L \xrightarrow{}L^{op}\otimes_{S\otimes S} L \xrightarrow{T \otimes id} L\otimes_{S\otimes S} L \xrightarrow{m} L = \\ 
L \xrightarrow{\epsilon} S \xrightarrow{\tilde{u}\otimes - } S\otimes S \xrightarrow{u} L. 
\end{multline*}
In the special case $S=\bf 1$, a Hopf algebroid is called a \emph{Hopf algebra}.
\end{defn}

As in the case of commutative Hopf algebras over fields, the category of comodules over a Hopf algebroid $L$ over $S$ which are dualizable $S$-modules, is a rigid symmetric monoidal category, which is not necessarily abelian.

Our main theorem that generalizes classical the fundamental theorem of classical Tannakian categories (cf. \cite{D}, 1.12), is the following:
\begin{theorem}\label{main}
Let $\mM$ be an abelian $k$-linear rigid symmetric monoidal category, and $s:\pP\to \mM$ a $k$-linear symmetric monoidal functor. Then $s$ defines a $\pP$-module structure on $\mM$. Assume that $\mM$ is {locally finite over $\pP$} and satisfies the Chevalley property as a module category over $\pP$. 
Let $S$ be a commutative algebra in $\pP$ and $i:\pP\to \Mod(S)$ is defined by $i(-) = -\otimes S$. Suppose $\omega:\mM\to \rm{Mod}(S)$ be a $k$-linear exact faithful and symmetric monoidal functor such that there is a  symmetric monoidal isomorphism $\nu:\omega\circ s \to {i}$ (which implies that $\omega$ is a $\pP$-module functor). Then $L(\omega)$ has a structure of a Hopf algebroid over $S$ and $\omega$ induces a $\pP$-module symmetric monoidal equivalence between $\mM$ and $\rm{coMod}(L(\omega))$.
\end{theorem}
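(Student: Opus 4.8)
The plan is to leverage Theorem \ref{fiber} to get the underlying $\pP$-module equivalence for free, and then to upgrade it to a symmetric monoidal equivalence by transporting the monoidal structures through $\omega$ and checking that $L(\omega)$ acquires exactly the Hopf algebroid structure that makes $\rm{coMod}(L(\omega))$ a rigid symmetric monoidal category. First I would observe that all the hypotheses of Theorem \ref{fiber} are in force: $\mM$ is abelian, locally finite over $\pP$, has the Chevalley property, $\omega:\mM\to\rm{Mod}(S)$ is exact, faithful, and $\pP$-module (the last point follows from the existence of the monoidal isomorphism $\nu:\omega\circ s\to i$, as the parenthetical in the statement asserts), and every $\omega(M)$ is dualizable because $\omega(M)$ is a finitely generated $S$-module on which rigidity of $\mM$ plus monoidality of $\omega$ forces a dual (by Lemma \ref{proj-dual} it suffices that $\omega(M)$ be finitely generated projective, and the dual object $\omega(M^\vee)$ supplies this). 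Hence $\tilde\omega:\mM\to\rm{coMod}(L(\omega))$ is already an equivalence of $\pP$-module categories. The remaining work is entirely about the extra monoidal structure.

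Next I would construct the Hopf algebroid structure on $L(\omega)$. The multiplication $m:L(\omega)\otimes_{S\otimes S}L(\omega)\to L(\omega)$ should come from the monoidal constraint of $\omega$: the natural isomorphism $\omega(M_1)\otimes_S\omega(M_2)\simeq\omega(M_1\otimes M_2)$ induces, after passing to duals and the coend defining $L(\omega)$, a pairing compatible with the two $S$-actions, and the unit $u:S\otimes S\to L(\omega)$ comes from $\nu:\omega(s({\bf 1}))\simeq S$ applied to the unit object. The universal property of $L(\omega)$ in Proposition \ref{ass-coalg} is the right tool here: rather than writing these maps out on coends, I would characterize $m$ and $u$ by the coactions they induce on the $\omega(M)$, and check associativity, commutativity, and the unit and counit axioms by comparing the two induced $\pP$-module morphisms $\omega\to\omega\otimes_S(\,\cdot\,)$ and invoking uniqueness. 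The antipode $T:L(\omega)^{op}\to L(\omega)$ is where rigidity enters decisively: the duality $M\mapsto M^\vee$ in $\mM$, transported through $\omega$ and the constraint $\omega(M^\vee)\simeq\omega(M)^*$, yields a map on the defining coend that reverses the two $S$-actions, and the antipode identity is forced by the rigidity triangle identities in $\mM$ together with $e_M,i_M$.

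Finally I would verify that $\tilde\omega$ is monoidal and that the Hopf algebroid structure is exactly the one for which the forgetful functor is monoidal. The cleanest route is to check that under the equivalence $\tilde\omega$, the monoidal product on $\rm{coMod}(L(\omega))$ induced by $m$ corresponds to the monoidal product on $\mM$; this reduces, via faithfulness and exactness of $\omega$, to the compatibility of the comodule structure maps $\nu_M$ (the universal coaction from Proposition \ref{ass-coalg}) with the monoidal constraints, which is again a consequence of $\omega$ being monoidal. The symmetry and rigidity then transfer automatically once $m$ and $T$ are in place.

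I expect the main obstacle to be the explicit construction of the \emph{multiplication} $m$ on $L(\omega)$ and the verification that it, together with $u$, $T$, $\Delta$, $\epsilon$, satisfies \emph{all} the Hopf algebroid axioms compatibly. The coend formula for $L(\omega)$ makes each individual structure map natural to write down, but checking that $m$ is coassociative-compatible (i.e.\ that $\Delta$ and $m$ interact as coalgebroid morphisms over $S\otimes S$) and that the antipode identity holds requires carefully tracking the interplay between $\otimes_S$, $\otimes_{S\otimes S}$, and the symmetry of $\pP$. I would try to sidestep the worst of these computations by working representably throughout: express every claimed identity of maps out of $L(\omega)$ as an identity of $\pP$-module morphisms out of $\omega$, where the monoidality and rigidity of $\mM$ give the needed relations directly, and then descend to $L(\omega)$ by the universal property. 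The genuinely unavoidable technical point is that the symmetry $\pP$ is used (as flagged at the start of the subsection) to make the two $\pP$-module structures on $\mM\otimes_\pP\mM$ agree, so that the monoidal product is well behaved as a $\pP$-balanced bifunctor; I would want to cite the tensor-product-of-module-categories construction of the previous subsection to make the target $\rm{coMod}(L(\omega))\otimes_\pP\rm{coMod}(L(\omega))\to\rm{coMod}(L(\omega))$ precise before comparing it with $\otimes$ on $\mM$.
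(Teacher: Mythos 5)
Your proposal is correct and follows essentially the same route as the paper: first invoke Theorem \ref{fiber} (with dualizability of each $\omega(M)$ coming from rigidity of $\mM$ and monoidality of $\omega$) to get the $\pP$-module equivalence, then obtain $m$, $u$, $T$ from the monoidal constraint, the isomorphism $\omega\circ s\simeq i$, and the duality on $\mM$, working representably via the universal property of $L(\omega)$. The paper packages the representability step you describe into a preparatory lemma identifying $L(i)\simeq S\otimes S$, $L(\omega^*)\simeq L(\omega)^{op}$ and $L(\omega\otimes_S\omega)\simeq L(\omega)\otimes_{S\otimes S}L(\omega)$ (the last using the $\mM\otimes_\pP\mM$ construction you flag as the key technical point), and likewise defers the verification of the Hopf algebroid axioms to Deligne's arguments.
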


The opposite category of every $\pP$-module category $\mM$ is also a $\pP$-module category with the module structure $(P,M)\mapsto P^* \otimes M$. 
\begin{lem}
Let $\mM$ be a module category over $\pP$, $S$ a commutative algebra in $\pP$ and $\omega$ a $\pP$-module functor from $\mM$ to the category of dualizable $S$-modules. 
\begin{itemize}
\item[(i)] Let $i:\pP\to \Mod(S)$ be defined by $i(-) = -\otimes S$, then $L(i)\simeq S\otimes S$.
\item[(ii)] Define $\omega^*: \mM^{op}\to \Mod(S)$ by $\omega^*(M) = \omega(M)^*$. Then $ L(\omega^*)\simeq L(\omega)^{op}$. 
\item[(iii)] By the universal property of $\mM\otimes_\pP\mM$, composition of $\omega\otimes\omega: \mM\otimes\mM\to \Mod(S)\otimes \Mod(S)$ with tensor product over $S$ induces a $\pP$-module functor:
$$\omega\otimes_S \omega:\mM\otimes_\pP\mM\to \Mod(S), \quad (X,Y)\mapsto \omega(X)\otimes_S \omega(Y),$$ 
and $L(\omega\otimes_S\omega) \simeq L(\omega)\otimes_{S\otimes S} L(\omega)$.
\end{itemize}
\end{lem}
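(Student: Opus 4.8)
This lemma records three compatibility properties of the associated-coalgebroid construction $\omega \mapsto L(\omega)$. The unifying strategy is to prove each part by matching universal properties rather than by manipulating coends directly. By Proposition \ref{ass-coalg}, for any $\pP$-module functor $\omega$ into dualizable $S$-modules, $L(\omega)$ is the object representing the functor $L' \mapsto \Hom_\pP(\omega, \omega \otimes_S L')$ on $(S,S)$-bimodules. So for each part I would compute this representing functor for the new functor in question and exhibit a natural isomorphism with the representing functor of the claimed answer; Yoneda then forces the isomorphism of the representing objects, which are automatically $(S,S)$-bimodule (indeed coalgebroid) isomorphisms since $\nu$ is universal.

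**Part (i).** For $i(-) = -\otimes S: \pP \to \Mod(S)$, a $\pP$-module morphism $\alpha: i \to i \otimes_S L'$ is determined, by the $\pP$-module constraint and Yoneda over $\pP$, by its value $\alpha_{\bf 1}: S \to S \otimes_S L' \simeq L'$, i.e. by a single morphism ${\bf 1} \to L'$ compatible with the bimodule structure — equivalently an $(S,S)$-bimodule map $S \otimes S \to L'$. This shows $S \otimes S$ represents $\Hom_\pP(i, i \otimes_S -)$, so $L(i) \simeq S \otimes S$. The main point is simply to unwind that $\omega(M)^* \otimes \omega(M)$ for $M = {\bf 1}$ is $S \otimes S$ and that the coend over $\pP$ collapses via Lemma \ref{Yoneda}.

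**Parts (ii) and (iii).** For (ii), the second formula \eqref{second-form} is the natural tool: $L(\omega^*) = \int^{M \in \mM^{op}} \Hom_S(\omega^*(M),S)\bp \omega^*(M)^*$, and since $\omega^*(M) = \omega(M)^*$ so that $\omega^*(M)^* \simeq \omega(M)$, comparing with the first formula for $L(\omega)$ gives an isomorphism of underlying objects; the key is to check that passing $\mM \rightsquigarrow \mM^{op}$ and dualizing interchanges the left and right $S$-actions, which is exactly the definition of $L(\omega)^{op}$, and that the induced counit and comultiplication are the opposite ones. For (iii), I would again compare representing functors: a $\pP$-module morphism $\omega \otimes_S \omega \to (\omega \otimes_S \omega) \otimes_S L'$ out of the balanced functor on $\mM \otimes_\pP \mM$ corresponds, by the defining universal property of $\otimes_\pP$ together with dualizability of each $\omega(X) \otimes_S \omega(Y)$, to a compatible pair of morphisms in each variable, hence to an $(S \otimes S)$-bilinear datum; this identifies the representing object with $L(\omega) \otimes_{S \otimes S} L(\omega)$.

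**Main obstacle.** The genuinely delicate step is bookkeeping the bimodule and coalgebroid structures rather than the bare objects. For (ii) I must verify that the isomorphism respects $\Delta$ and $\epsilon$ after the switch, which requires tracing the switch map $L \otimes_S L \to L^{op} \otimes_S L^{op}$ through formula \eqref{second-form}; for (iii) the subtlety is that $L(\omega) \otimes_{S \otimes S} L(\omega)$ uses the $(S \otimes S)$-module structure coming from the two separate $S$-actions, and I would need the balanced/universal property of $\mM \otimes_\pP \mM$ to line up exactly with the relative tensor product over $S \otimes S$. I expect these structural checks, though formal, to be the part demanding care; the object-level isomorphisms follow cleanly from Proposition \ref{ass-coalg}, \eqref{second-form}, and Lemma \ref{Yoneda}.
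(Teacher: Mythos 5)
Your parts (i) and (ii) are essentially sound and close in spirit to the paper's own argument. For (i) the paper simply computes the coend directly, $L(i)=\int^{\pP}\Hom_S(P\otimes S,S)\bp(P\otimes S)\simeq\bigl(\int^{\pP}\Hom(P,S)\bp P\bigr)\otimes S\simeq S\otimes S$ by Lemma \ref{Yoneda}; your representing-functor argument reaches the same place. For (ii) the paper works with the coequalizer presentation \eqref{eqn1} rather than with \eqref{second-form}, precisely because that presentation makes the interchange of the two $S$-actions visible; note also that your displayed formula for $L(\omega^*)$ is garbled --- applying \eqref{second-form} to $\omega^*$ gives $\int^{M}\Hom_S(S,\omega^*(M))\bp\omega^*(M)^*$, not $\int^{M}\Hom_S(\omega^*(M),S)\bp\omega^*(M)^*$ --- though the intended comparison with $L(\omega)$ via $\omega^*(M)^*\simeq\omega(M)$ does go through.

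Part (iii) is where you have a genuine gap. The sentence ``corresponds \dots to a compatible pair of morphisms in each variable, hence to an $(S\otimes S)$-bilinear datum'' is precisely the statement to be proved, and it is not a formal consequence of the universal property: a $\pP$-module natural transformation $\omega\otimes_S\omega\to(\omega\otimes_S\omega)\otimes_S L'$ is a single family indexed by objects of $\mM\otimes_\pP\mM$, and nothing a priori splits it into data in each variable separately. The paper's proof supplies the missing mechanism: it first computes the coalgebroid $L(\alpha)$ of $\alpha=(\omega\otimes_S\omega)\circ G$ on $\mM\otimes\mM$ via the coequalizer presentation \eqref{eqn1}, uses a Fubini interchange of coends together with right-exactness of the relative tensor product to identify the underlying object with $\bigl(\int^{\mM}\omega(M)^*\otimes\omega(M)\bigr)\otimes_{S\otimes S}\bigl(\int^{\mM}\omega(M)^*\otimes\omega(M)\bigr)$ modulo the relations \eqref{eqn2} and \eqref{eqn3}, and then observes that passing from $\mM\otimes\mM$ to $\mM\otimes_\pP\mM$ imposes exactly one additional family of relations \eqref{eqn4} coming from the balancing isomorphisms $g_{M,P,N}$, which, again by right-exactness of $\otimes_{S\otimes S}$, turns the quotient into $L(\omega)\otimes_{S\otimes S}L(\omega)$. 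Without this explicit presentation --- or an equivalent argument showing that the functor represented by the two-variable module functor is the tensor product over $S\otimes S$ of the two one-variable ones --- your identification of the representing object is unjustified. You correctly flagged this as the delicate point, but flagging it is not the same as closing it.
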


\begin{proof}
(i) By the definition of associated coalgebroid and Yoneda's lemma, we have:
\begin{eqnarray*}
L(i) &= & \int^{\pP} \Hom_S(P\otimes S,S)\bp (P\otimes S) \\
&\simeq & \left(\int^{\pP} \Hom(P,S)\bp P\right)\otimes S \quad\simeq S\otimes S.
\end{eqnarray*}
it is easy to see that this isomorphism is an equivalence of coalgebroids over $S$.

(ii) In the proof of Proposition \ref{ass-coalg}, we showed that $L(\omega)$ is naturally equivalent to the coequalizer of two morphisms $\phi$ and $\psi$ in (\ref{eqn1}). Writing the similar diagram for $\omega^*$, we have

\begin{multline*}
L(\omega^*) \simeq \mathrm{coeq}\left(
\int^{P\in\pP} \int^{M\in\mM} \omega(M)\otimes P^* \otimes P \otimes \omega(M)^*  \xymatrix{\ar@<.4ex>[r]^(.5){\,\phi'\,}\ar@<-.4ex>[r]_(.5){\,\psi'\,} & }\right.\\
\left. \int^{M\in \mM} \omega(M)\otimes \omega(M)^*
\right),
\end{multline*}
with similar definitions for $\phi'$ and $\psi'$. Thus there is an evident isomorphism between $L(\omega^*)$ and $L(\omega)$, but this isomorphism changes the directions of the actions of $S$ and is an isomorphism to $L(\omega)^{op}$ as a coalgebroid over $S$.

(iii) Let $\alpha:\mM\otimes \mM\to \Mod(S)$ be the functor defined by $\alpha(M,N) = \omega(M)\otimes_S \omega(N)$. Then by considering the action of $\pP$ on the first factor of $\mM\otimes \mM$, $\alpha$ is a $\pP$-module functor and for the structure functor $G:\mM\otimes\mM\to\mM\otimes_\pP\mM$, we have $\alpha = (\omega\otimes_S\omega)\circ G$.

Now again by using the expression (\ref{eqn1}) for associated coalgebroid and right exactness of tensor product over $S$, we see that $L(\alpha)$ is the quotient of 
\begin{multline*}
L:= \int^{(M,N)\in\mM\otimes\mM} \omega(N)^*\otimes_S\omega(M)^*\otimes\omega(M)\otimes_S\omega(N)\simeq \\
\left(\int^\mM \omega(M)^*\otimes \omega(M)\right)\otimes_{S\otimes S}\left(\int^\mM \omega(M)^*\otimes \omega(M)\right)
\end{multline*}
obtaind by equalizing two morphisms:
\begin{multline}\label{eqn2}
\omega(N)^*\otimes_S\omega(M)^*\otimes P^*\otimes P \otimes \omega(M)\otimes_S\omega(N)\xrightarrow{id \otimes ev\otimes id } \\ \omega(N)^*\otimes_S\omega(M)^* \otimes \omega(M)\otimes_S\omega(N)\to L,
\end{multline}
and 
\begin{multline}\label{eqn3}
\omega(N)^*\otimes_S\omega(M)^*\otimes P^*\otimes P \otimes \omega(M)\otimes_S\omega(N)\xrightarrow{\simeq } \\ \omega(N)^*\otimes_S\omega(P\otimes M)^* \otimes \omega(P\otimes M)\otimes_S\omega(N)\to L,
\end{multline}
for every $M,N\in \mM$ and $P\in \pP$. For the computation of $L(\omega\otimes_S\omega)$ we have the same construction but we only need to compute the above coend over $\mM\otimes_\pP\mM$ which is obtained by adding isomorphisms $g_{M,P,N}$ to $\mM\otimes \mM$. So $L(\omega\otimes_S\omega)$ is the quotient of $L(\alpha)$ obtained by equalizing two morphisms
$$
\omega(N)^*\otimes_S\omega(P\otimes M)^* \otimes \omega(P\otimes M)\otimes_S\omega(N) \to L(\alpha),
$$
and
\begin{multline*}
\omega(N)^*\otimes_S\omega(P\otimes M)^* \otimes \omega(P\otimes M)\otimes_S\omega(N) \xrightarrow{\simeq} \\
\omega(P\otimes N)^*\otimes_S\omega( M)^* \otimes \omega(M)\otimes_S\omega(P\otimes N) \to L(\alpha),
\end{multline*}
for every $M,N,P$.

So $L(\omega\otimes_S\omega)$ can be viewed as the coequalizer of morphisms (\ref{eqn2}) and (\ref{eqn3}) and 
\begin{multline}\label{eqn4}
\omega(N)^*\otimes_S\omega(M)^*\otimes P^*\otimes P \otimes \omega(M)\otimes_S\omega(N)\xrightarrow{\simeq } \\ \omega(P\otimes N)^*\otimes_S\omega( M)^* \otimes \omega( M)\otimes_S\omega(P\otimes N)\to L,
\end{multline}
and therefore by right exactness of $\otimes_S$,
$$L(\omega\otimes_S\omega) \simeq L(\omega) \otimes_{S\otimes S}L(\omega),$$
which is also an isomorphism of coalgebroids.

\end{proof}

\begin{proof}[Proof of \ref{main}] First we note that if we define the action of $\pP$ on $\mM$ using the monoidal functor $s:\pP\to \mM$ and the tensor product of $\mM$, the monoidal isomorphism $\omega\circ s\simeq i$ shows that $\omega$ is a $\pP$-module functor. On the other hand, $\omega$ is monoidal and sends every object of $\mM$ (which is dualizable) to a dualizable object in $\Mod(S)$. So $\omega:\mM\to \Mod(S)$ satisfies all of conditions of Theorem \ref{fiber}, and if we write $L = L(\omega)$, we have $\mM\simeq \rm{coMod}(L)$. It remains to show that  extra structures on $\mM$ and $\omega$, induces a structure of Hopf algebroid on $L$ and $\mM$ is equivalent as a monoidal category with $\rm{coMod}(L)$.

For this note that by the conditions on $\omega$, we have the following diagrams which are commutative up to natural $\pP$-module equivalences:
$$
\xymatrix{ \mM\otimes_\pP \mM\ar[r]^{\otimes}\ar[d]_{\omega\otimes \omega} & \mM\ar[d]^{\omega} \\ 
\Mod(S)\otimes_\pP \Mod(S) \ar[r]^(.65){\otimes_S} & \Mod(S) } 
$$
and 
$$
\xymatrix{\mM^{op} \ar[r]^{(-)^*}\ar[dr]^{\omega^*}& \mM\ar[d]^\omega \\ &\Mod(S)} 
\quad \quad
\xymatrix{\pP\ar[r]^s\ar[rd]_i & \mM\ar[d]^\omega \\ & \Mod(S)}$$

By the first diagram we have an equivalence $\omega\otimes_S\omega \simeq \omega(-\otimes -)$. So the universal $\pP$-module morphism $\omega\to \omega\otimes_S L$, gives a $\pP$-module morphism $\omega\otimes_S\omega \to (\omega\otimes_S\omega)\otimes_S L$ and so there is a corresponding morphism of coalgebroids:
$$m: L(\omega\otimes_S\omega)\simeq L\otimes_{S\otimes S} L \to L.$$
Similarly the second and third diagrams induce corresponding morphisms:
$$T: L(\omega^*)\simeq L^{op} \to L,\qquad u:L(i)\simeq S\otimes S \to L.$$

These morphisms satisfy the required conditions for a Hopf algebroid because of the conditions on the monoidal structure of $\mM$ and $\omega$. The proofs of these facts are completely parallel to the proofs of  similar results in the characterization of Tannakian categories (cf. \cite{D}, \S 6).
\end{proof}

Trying to obtain a  converse statement for the above theorem,  leads naturally to the following question:
\begin{question}
Let $S$ be an algebra in $\pP$ and $L$ a commutative Hopf algebroid over $S$ and define $\omega:\coMod(L)\to \Mod(S)$ to be the forgetful functor. For which classes of Hopf algebroids $L$, $\coMod(L)$ is rigid (or equivalently, the image of $\omega$ lies in the subcategory of dualizable $S$-modules) and the  morphism $L(\omega)\to L$ (corresponding to the universal property of $L(\omega)$) is isomorphism?
\end{question}
In the classical case, $\pP=\rm{Vect}_k$, the answer of the above question is the class of all commutative Hopf algebroids $L$ over $S$, which are faithfully flat
over $S\otimes S$ (cf. \cite{D}, Theorem 1.12, (iii)). An easy argument shows that the same thing holds for a general neutral Tannakian category $\pP$, 
but we do not know the answer in general.

\begin{rem}
In the special case $S=\bf 1$, $\Mod(S) = \pP$ and so the set of morphisms between every two object in $\Mod(S)$ is finite dimensional over $k$ and all objects have finite length. So if $\omega:\mM\to \pP$ be a faithful $\pP$-module functor, $\mM$ has also this properties. Furthermore the inclusions
$$\Hom_\mM(P\otimes M, N) \hookrightarrow \Hom_\pP(P\otimes \omega(M),\omega(N)), \quad (M,N\in\mM, P\in \pP)$$
induces an inclusion 
$$\uHom_\mM(M,N) \hookrightarrow \uHom_\pP(\omega(M),\omega(N))\simeq \omega(M)^*\otimes\omega(N),$$
and so the internal hom's of objects in $\mM$ are in $\pP$ and $\mM$ is locally finite over $\pP$. So in the special case $S=\bf 1$, we can remove the locally finiteness condition of above theorem and $\mM$ will be equivalent to the category of comodules over a Hopf algebra in $\pP$ (or equivalently ``finite representations of an affine group scheme in $\pP$"). This result (with small modifications) can be obtained also by Deligne's notion of the ``fundamental group" of a tensor category, which we will discuss in the next section.
\end{rem}

\section{Fundamental group of tensor categories}\label{fund-grp}

Recall that a $k$-tensor category (\cite{D}, 2.1) is a $k$-linear abelian rigid symmetric monoidal category with further condition, $\mathrm{End}({\bf 1})  =k.$ A \emph{Tannakian category}  is a $k$-tensor category together with a fiber functor (i.e. $k$-linear exact symmetric monoidal functor) to the category of $S$-modules for a nonzero commutative $k$-algebra $S$.


For every two locally finite $k$-linear abelian categories $\mathcal{A}_1,\mathcal{A}_2$, there is a $k$-linear abelian category\footnote{We used this notation to distinguish it from our notation $\otimes$ for product of categories.} $\mathcal{A}_1\otimes^D \mathcal{A}_2$ with a bilinear functor $\otimes: \mathcal{A}_1\times \mathcal{A}_2\to \mathcal{A}_1\otimes^D \mathcal{A}_2$ which is right exact in each variable, and for every bilinear functor $F:\mathcal{A}_1\times \mathcal{A}_2\to \cC$ to a $k$-linear abelian category which is right exact in each variable, there exists a $k$-linear right exact functor $\tilde{F}: \mathcal{A}_1\otimes^D \mathcal{A}_2\to \cC$ such that $F = \tilde{F}\circ \otimes$ and $\tilde{F}$ is unique up to equivalence. (cf. \cite{D}, \S 5)

If $\cal{T}$ is a locally finite $k$-tensor category, then there is an induced monoidal structure on $\cal{T}\otimes^D\cal{T}$ and $\cal{T}\otimes^D\cal{T}$ is a $k$-tensor category if $k$ is perfect or $\cal{T}$ is Tannakian. (\cite{D}, 8.1)

If $\omega:\cal{A}\to \cal{T}$ is a functor between a $k$-linear category $\cal{A}$ and  a $k$-tensor category $\cal{T}$, define:
$$\Lambda(\omega) := \int^{A\in \cal{A}} \omega(A)^*\otimes \omega(A),$$
it is an object of $\rm{ind}(\cal{T})$ and there is a universal morphism $\omega\to\omega\otimes\Lambda(\omega) $ such that for every $\Lambda\in\rm{ind}(cal{T})$ induces an isomorphism: (\cite{D}, 8.4) 
$$\Hom(\omega,\omega\otimes \Lambda)\simeq \Hom(\Lambda(\omega),\Lambda).$$ 

In the case $\omega:\cal{T}_1\to \cal{T}_2$ be a exact tensor functor (i.e. symmetric monoidal) between $k$-tensor categories, $\Lambda(\omega)$ is equipped with a structure of a Hopf algebra in $\cal{T}_2$ and there is a corresponding ``affine group scheme" \footnote{For basic algebraic geometry notions in tensor categories, see \cite{D}, 7.8} in $\tT_2$ (or a $\cal{T}_2$-group):
$$\pi(\omega):= \rm{Sp}(\Lambda(\omega)).$$
By definition this group represents the functor 
$$\rm{Sp}(R)\mapsto \rm{Isom}^\otimes(\omega\otimes R,\omega\otimes R ), $$
for commutative algebras $R$ in $\cal{T}_2$. (\cite{D}, 8.12-8.15)

In the special case $\omega = id_\cal{T}$ for a $k$-tensor category $\cal{T}$, we denote $\pi(id_\cal{T})$ by $\pi(\cal{T})$ and call it the \emph{fundamental group} of $\cal{T}$. So every object of $\cal{T}$ has an action of $\pi(\cal{T})$ which is called the \emph{natural action}.\footnote{The fundamental group has also a natural action on every algebra in the category and because the category of affine schemes is the opposite category of the category of algebras, it has also a right action on every affine scheme. But we use the word ``natural action" in the case of affine schemes for the inverse of this action which is also an action from left.} (\cite{D}, 8.12, 8.13) The natural action of $\pi(T)$ on itself is equal to the conjugation action, this is a special case of Lemma \ref{nat-act} below. 

For a general exact tensor functor $\omega:\cal{T}_1\to \cal{T}_2$ between $k$-tensor categories, the exactness and monoidal properties of $\omega$ immediately shows that
$$\pi(\omega) \simeq \omega(\pi(\cal{T}_1)),$$
and the universal property of $\Lambda(\omega)$ gives a morphism:
$$\Lambda(\omega) = \Lambda(id_{\cal{T}_2}\circ \omega)\to \Lambda(id_{\cal{T}_2}) ,$$
which induces a morphism of $\cal{T}_2$-groups: (\cite{D}, 8.15)
\begin{equation}\label{nu}
\nu:\pi(\cal{T}_2)\to \omega(\pi(\cal{T}_1)).
\end{equation}

\begin{lem}\label{nat-act}
With above notations, the natural action of $\pi(\cal{T}_2)$ on $\omega(\pi(\cal{T}_1))$ (as an affine scheme in $\cal{T}_2$) is equal to the conjugation action of $\pi(\cal{T}_2)$ by $\nu$ on $\omega(\pi(\cal{T}_1))$.
\end{lem}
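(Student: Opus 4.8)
The plan is to prove the equality of the two morphisms of $\cal{T}_2$-schemes
$$\pi(\cal{T}_2)\times \omega(\pi(\cal{T}_1)) \longrightarrow \omega(\pi(\cal{T}_1))$$
by comparing them on $R$-points for every commutative algebra $R$ in $\cal{T}_2$. Since $\omega(\pi(\cal{T}_1))=\rm{Sp}(\Lambda(\omega))$ is an affine scheme, Yoneda guarantees that an equality of the induced maps $\pi(\cal{T}_2)(R)\times \omega(\pi(\cal{T}_1))(R)\to \omega(\pi(\cal{T}_1))(R)$, natural in $R$, forces the two action morphisms to coincide. I would therefore set $G:=\omega(\pi(\cal{T}_1))$, recall the identifications $\pi(\cal{T}_2)(R)=\rm{Isom}^\otimes(\mathrm{id}_{\cal{T}_2}\otimes R,\mathrm{id}_{\cal{T}_2}\otimes R)$ and $G(R)=\rm{Isom}^\otimes(\omega\otimes R,\omega\otimes R)$, and fix the tautological description of a point $h\in G(R)$ as the family $h_X=(\mathrm{id}\otimes h)\circ c_X$, where $c_X:\omega(X)\to \omega(X)\otimes \Lambda(\omega)$ is the universal coaction attached to the coend $\Lambda(\omega)\simeq \int^{X}\omega(X)^*\otimes\omega(X)$.

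The conjugation side is immediate. Since $\nu$ in \eqref{nu} is dual to the morphism $\Lambda(\omega)=\Lambda(\mathrm{id}_{\cal{T}_2}\circ\omega)\to \Lambda(\mathrm{id}_{\cal{T}_2})$, its effect on points is restriction of a tensor automorphism of $\mathrm{id}_{\cal{T}_2}\otimes R$ along $\omega$, that is $\nu(\lambda)_X=\lambda_{\omega(X)}$ for $\lambda\in\pi(\cal{T}_2)(R)$ and $X\in\cal{T}_1$. As the group law on $G(R)$ is composition of tensor automorphisms, the conjugation action of $\lambda$ on $h\in G(R)$ is the point whose $X$-component is
$$\big(\nu(\lambda)\,h\,\nu(\lambda)^{-1}\big)_X=\lambda_{\omega(X)}\circ h_X\circ \lambda_{\omega(X)}^{-1}.$$

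For the natural action I would use that each $\lambda\in\pi(\cal{T}_2)(R)$ is, by definition, a monoidal natural automorphism of $\mathrm{id}_{\cal{T}_2}\otimes R$, hence commutes with every morphism of $\mathrm{ind}(\cal{T}_2)$; applied to $c_X$ this gives the equivariance $(\lambda_{\omega(X)}\otimes \lambda_{\Lambda(\omega)})\circ c_X = c_X\circ \lambda_{\omega(X)}$, where $\lambda_{\Lambda(\omega)}$ is the natural action on the algebra $\Lambda(\omega)$ and I have used $\lambda_{\omega(X)\otimes\Lambda(\omega)}=\lambda_{\omega(X)}\otimes\lambda_{\Lambda(\omega)}$. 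Substituting $h_X=(\mathrm{id}\otimes h)\circ c_X$ and the inverted relation $c_X\circ\lambda_{\omega(X)}^{-1}=(\lambda_{\omega(X)}^{-1}\otimes\lambda_{\Lambda(\omega)}^{-1})\circ c_X$ yields
$$\lambda_{\omega(X)}\circ h_X\circ \lambda_{\omega(X)}^{-1}=\big(\mathrm{id}\otimes (h\circ \lambda_{\Lambda(\omega)}^{-1})\big)\circ c_X,$$
so conjugating the point $h$ by $\lambda_{\omega(X)}$ produces the point $h\circ\lambda_{\Lambda(\omega)}^{-1}$. But $h\mapsto h\circ\lambda_{\Lambda(\omega)}^{-1}$ is exactly the natural action of $\lambda$ on the scheme $G=\rm{Sp}(\Lambda(\omega))$, since the natural action on an affine scheme is, by the convention adopted in \S\ref{fund-grp}, the inverse of the natural action on its coordinate algebra. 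Comparing the two displays exhibits the natural action of $\lambda$ on $h$ as $\lambda_{\omega(X)}\circ h_X\circ\lambda_{\omega(X)}^{-1}$, identical to the conjugation action, and naturality in $R$ then finishes the proof by Yoneda.

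The step requiring the most care, and which I expect to be the main obstacle, is precisely this last identification: one must line up the direction conventions (the contragredient on $\omega(X)^*$, and the inverse built into the natural action on a scheme versus on its algebra of functions) so that the two occurrences of $\lambda_{\Lambda(\omega)}^{-1}$ cancel as claimed. A cleaner way to organize the argument is to note that $\Lambda(\omega)\simeq\int^{X}\uHom(\omega(X),\omega(X))$ and that the natural action of $\pi(\cal{T}_2)$ on each internal endomorphism object is conjugation by $\lambda_{\omega(X)}$; this renders the coincidence with conjugation through $\nu$ transparent term by term on the coend, and reduces the whole statement to the functoriality of the natural action together with the formula $\nu(\lambda)_X=\lambda_{\omega(X)}$. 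The specialization $\cal{T}_1=\cal{T}_2$, $\omega=\mathrm{id}$, $\nu=\mathrm{id}$ recovers the assertion that the natural action of $\pi(\cal{T}_2)$ on itself is conjugation, giving a useful consistency check.
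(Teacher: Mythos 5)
Your proof is correct and follows essentially the same route as the paper's: both arguments rest on the equivariance of the tautological (co)action of $\omega(\pi(\cal{T}_1))$ on the objects $\omega(X)$ under the natural action of $\pi(\cal{T}_2)$, the identification $\nu(\lambda)_X=\lambda_{\omega(X)}$, and the fact that a point of $\omega(\pi(\cal{T}_1))(R)\simeq \rm{Isom}^\otimes(\omega\otimes R,\omega\otimes R)$ is determined by its components on the $\omega(X)\otimes R$. The only cosmetic difference is that you express the equivariance on the coordinate algebra $\Lambda(\omega)$ via the universal coaction $c_X$ and then unwind the scheme-versus-algebra sign convention, whereas the paper writes the same identity directly for the induced actions $\rho_X$ on $R$-points.
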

\begin{proof}
The application of $\omega$ to the natural action of $\pi(\tT_1)$ on each $X\in \tT_1$, gives an action of $\omega(\pi(\tT_1))$ on every $\omega(X)$ (denoted by $\rho_X$) and by definition of $\nu$ the natural action of $\pi(\tT_2)$ on $\omega(X)$ is the composition of  this action with $\nu$ (denoted by $\sigma_X$). We also denote the natural action of $\pi(\tT_2)$ on $\omega(\pi(\tT_2))$ by $\sigma$.

Let $R$ be an algebra in $\tT_2$ and $X\in \tT_1$. Now for arbitrary $g\in \pi(\tT_2)(R)$, the following diagram is commutative: (it is because all morphisms in $\tT_2$ respect the natural action)
$$\xymatrix{
\omega(\pi(\tT_1))(R)\times (\omega(X)\otimes R) \ar[rr]^(.65){\rho_X}\ar[d]_{\sigma(g)\times \sigma_X(g)} &&\omega(X)\otimes R\ar[d]^{\sigma_X(g)}\\
\omega(\pi(\tT_1))(R)\times (\omega(X)\otimes R) \ar[rr]^(.65){\rho_X} &&\omega(X)\otimes R}
$$

So for every $h\in \omega(\pi(\tT_1))(R)$, we have
\begin{align*}
\rho_X(\sigma(g)(h)) &= \sigma_X(g)\circ \rho_X(h)\circ \sigma_X(g^{-1}) \\
&= \rho_X(\nu(g)) \circ \rho_X(h)\circ \rho_X(\nu(g^{-1})) = \rho_X( \nu(g)h\,\nu(g)^{-1}).
\end{align*}

Now 
$$\omega(\pi(\tT_1))(R) \simeq \pi(\omega)(R) \simeq \rm{Isom}^\otimes(\omega\otimes R, \omega\otimes R),$$
so every element of $\omega(\pi(\tT_1))(R)$ is determined by its action on $\omega(X)\otimes R$'s. So above equation shows that $\sigma(g)(h) = \nu(g)h\nu(g)^{-1}$ and the proof is complete. 
\end{proof}

\begin{theorem}[\cite{D}, 8.17] \label{deligne} Suppose $\cal{T}_1$ and $\cal{T}_2$ be two locally finite $k$-tensor categories and $\omega$ an exact tensor functor from $\cal{T}_1$ to $\cal{T}_2$. Suppose also that $\cal{T}_2\otimes^D\cal{T}_2$ is a tensor category over $k$. Then the tensor functor from $\cal{T}_1$ to the category of objects of $\cal{T}_2$ with an action of $\omega(\pi(\cal{T}_1))$ such that the composition of this action with the  morphism $\nu:\pi(\cal{T}_2)\to \omega(\pi(\cal{T}_1))$ (in (\ref{nu})) is the natural action, is equivalence. 
\end{theorem}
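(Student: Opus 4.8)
The plan is to realize this as a relative Tannakian reconstruction. Write $G := \omega(\cal{T}_1\text{'s fundamental group}) = \omega(\pi(\cal{T}_1))$, let $\cal{R}$ denote the target category (objects $(Y,\rho)$ of $\cal{T}_2$ equipped with a $G$-action $\rho$ whose restriction along $\nu:\pi(\cal{T}_2)\to G$ is the natural $\pi(\cal{T}_2)$-action), and let $\Phi:\cal{T}_1\to\cal{R}$ be the functor $X\mapsto(\omega(X),\rho_X)$, where $\rho_X$ is $\omega$ applied to the natural action of $\pi(\cal{T}_1)$ on $X$. First I would check that $\Phi$ is well defined and a tensor functor: well-definedness of the $\nu$-constraint is exactly Lemma \ref{nat-act}, while compatibility with the tensor product, unit and duals follows from $\omega$ being a tensor functor and the natural action being a tensor automorphism. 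Since the forgetful functor $\cal{R}\to\cal{T}_2$ is an exact faithful tensor functor whose composition with $\Phi$ is $\omega$ (which, being an exact tensor functor between rigid categories, is faithful), I would reduce the statement to two assertions: that $\Phi$ is fully faithful, and that its essential image is all of $\cal{R}$. For the latter it suffices, by the usual criterion, to show the image is stable under subquotients in $\cal{R}$ and that every object of $\cal{R}$ is a subquotient of an object in the image.

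For full faithfulness, I would use rigidity to replace $\Hom_{\cal{T}_1}(X,X')$ by $\Hom_{\cal{T}_1}({\bf 1},Z)$ with $Z=X^*\otimes X'$, and likewise identify a morphism $\Phi X\to\Phi X'$ in $\cal{R}$ with a $G$-equivariant morphism ${\bf 1}\to\omega(Z)=\omega(X)^*\otimes\omega(X')$ (the $\nu$-compatibility imposing nothing further on morphisms, since every morphism of $\cal{T}_2$ is already $\pi(\cal{T}_2)$-equivariant for the natural action). Thus full faithfulness becomes the single statement that $\omega$ induces, for every $Z\in\cal{T}_1$, an isomorphism
$$\Hom_{\cal{T}_1}({\bf 1},Z)\;\xrightarrow{\ \sim\ }\;\Hom_{\cal{T}_2}({\bf 1},\omega(Z))^{\,\omega(\pi(\cal{T}_1))},$$
i.e. that $\omega$ carries $\pi(\cal{T}_1)$-invariant global sections bijectively to $\omega(\pi(\cal{T}_1))$-invariant ones. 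Here I would invoke the universal property of $\Lambda(\omega)$, namely that it represents $\Hom(\omega,\omega\otimes -)$ with $\pi(\omega)=\omega(\pi(\cal{T}_1))$: the invariants on the right are the equalizer computed from the $\Lambda(\omega)$-coaction, and the coend description $\Lambda(\omega)=\int^{A}\omega(A)^*\otimes\omega(A)$ together with the faithful exactness of $\omega$ identifies them with morphisms in $\cal{T}_1$.

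With $\Phi$ fully faithful, I would show the image is closed under subquotients: a subobject of $\Phi X$ in $\cal{R}$ is a $G$-stable subobject $Y\hookrightarrow\omega(X)$, and $G$-stable subobjects of $\omega(X)$ correspond under the (exact, fully faithful) $\omega$ to $\pi(\cal{T}_1)$-stable subobjects of $X$, which are all subobjects of $X$ (the natural action stabilizes every subobject); exactness of $\omega$ then yields $Y\cong\omega(X'')$ for the corresponding $X''\subset X$, compatibly with the actions, and dually for quotients. For essential surjectivity it then remains to see that every $(Y,\rho)\in\cal{R}$ is a subquotient of an object in the image. For this I would use that $Y$ is a $\Lambda(\omega)$-comodule and that the regular comodule $\Lambda(\omega)=\int^{A}\omega(A)^*\otimes\omega(A)$ is an ind-object assembled from the $\omega(A)$, which lie in the image; hence the cofree comodule on $Y$ is an ind-object of the image, the $\nu$-compatibility constraint cuts it down exactly to $\cal{R}$, and closure under subquotients forces $(Y,\rho)$ into the essential image. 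The hypothesis that $\cal{T}_2\otimes^D\cal{T}_2$ is a tensor category enters precisely here (and in equipping $\Lambda(\omega)$ with its Hopf algebra structure), ensuring these coend and ind-object manipulations stay inside a well-behaved tensor category.

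The main obstacle I expect is essential surjectivity, and specifically the verification that the $\nu$-compatibility constraint carves out the image exactly: one must show both that the constraint is necessary (the underlying $\pi(\cal{T}_2)$-action of an image object is the natural one, which is Lemma \ref{nat-act}) and that it is sufficient to recover $Y$ as $\omega$ of an object of $\cal{T}_1$, where the generation-by-the-regular-representation argument and the hypothesis on $\cal{T}_2\otimes^D\cal{T}_2$ do the real work. By contrast, full faithfulness is essentially formal once the invariants computation through $\Lambda(\omega)$ is in place.
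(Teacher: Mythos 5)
First, note that the paper does not prove this statement: it is imported verbatim as Deligne's Theorem 8.17 of \cite{D} and then used as a black box in Corollary \ref{fiber-tensor}, so there is no in-paper argument to compare yours against. The relevant benchmark is Deligne's own proof, which goes through the right adjoint $\alpha$ of $\omega$ (available by local finiteness) and the Barr--Beck comonadicity statement that this paper itself quotes as (\cite{D}, 4.1) in the proof of Theorem \ref{fiber}: one identifies $\cal{T}_1$ with coalgebras over the comonad $\omega\alpha$, and the real content of 8.17 is then the identification of $\omega\alpha$-coalgebras with $\omega(\pi(\cal{T}_1))$-actions satisfying the $\nu$-compatibility.

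Your outline has the standard Tannakian skeleton (fully faithful, image closed under subquotients, every object a subquotient of the image), but each load-bearing step is asserted rather than proved, and the essential-surjectivity step is set up incorrectly. You embed $(Y,\rho)$ into the cofree comodule $Y\otimes\Lambda(\omega)$ (coaction $id\otimes\Delta$) and claim it is an ind-object of the image. But that object does not lie in $\cal{R}$ at all: restricting $id\otimes\Delta$ along $\nu$ gives the \emph{trivial} $\pi(\cal{T}_2)$-action on the factor $Y$ and the translation action on $\Lambda(\omega)$, whereas the natural $\pi(\cal{T}_2)$-action on $Y\otimes\Lambda(\omega)$ is the tensor product of the natural action on $Y$ with the conjugation-type action on $\Lambda(\omega)=\mathcal{O}(G)$ (that is exactly what Lemma \ref{nat-act} computes). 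So ``the $\nu$-compatibility constraint cuts it down exactly to $\cal{R}$'' is not a step one can take; it is the entire difficulty, and the fix is to embed $Y$ into $\omega(\alpha(Y))$, which is in the image of $\omega$ by construction. The other two steps have the same character: your subquotient-closure argument rests on the unproved assertion that every $G$-stable subobject of $\omega(X)$ is $\omega$ of a subobject of $X$ (this is essentially the theorem again, not a consequence of full faithfulness), and your full-faithfulness step correctly reduces to $\Hom_{\cal{T}_1}({\bf 1},Z)\simeq\Hom_{\cal{T}_2}({\bf 1},\omega(Z))^{G}$ but justifies only the easy injectivity; surjectivity onto the invariants is where the coend formula alone does not suffice and the comonadic machinery (or faithful flatness of $\Lambda(\omega)$) must enter.
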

We use the above theorem to prove the following corollary, which is a variant  to the special case $S=\bf 1$ of \ref{main}.
\begin{cor}\label{fiber-tensor}
Suppose $\cal{T}_1$ and $\cal{T}_2$ be two locally finite $k$-tensor categories and $\omega$ an exact tensor functor from $\cal{T}_1$ to $\cal{T}_2$. Suppose also that $\cal{T}_2\otimes^D\cal{T}_2$ is a tensor category over $k$. Then if there exists an exact tensor functor $s:\cal{T}_2\to \cal{T}_1$ with tensor isomorphism $\alpha: \omega\circ s \to id_{\cal{T}_2}$, then there is a { $\tT_2$-group $G$ such that $\cal{T}_1$ is tensor equivalent to $\rm{Rep}_{\tT_2}(G)$ and $\omega$ is equal to the forgetful functor $\rm{Rep}_{\tT_2}(G)\to \cal{T}_2$ under this equivalence.} 
\end{cor}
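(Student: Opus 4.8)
The plan is to reduce Corollary \ref{fiber-tensor} to Theorem \ref{deligne} by analyzing the morphism $\nu:\pi(\cal{T}_2)\to \omega(\pi(\cal{T}_1))$ from (\ref{nu}) and using the section $s$ to split it. The key observation is that Theorem \ref{deligne} already identifies $\cal{T}_1$ with the category of objects of $\cal{T}_2$ carrying an action of the $\cal{T}_2$-group $\omega(\pi(\cal{T}_1))$ whose restriction along $\nu$ recovers the natural $\pi(\cal{T}_2)$-action. So the content of the corollary is to produce a \emph{single} $\cal{T}_2$-group $G$, built from the section data, together with an equivalence that turns this ``action compatible with the natural action along $\nu$'' description into an honest representation category $\rm{Rep}_{\cal{T}_2}(G)$ with $\omega$ matching the forgetful functor.

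\medskip

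First I would use the section $s$ to split $\nu$. Applying the functoriality of $\pi(-)$ to $s:\cal{T}_2\to\cal{T}_1$ and using the tensor isomorphism $\alpha:\omega\circ s\simeq id_{\cal{T}_2}$, one gets a morphism $s_*:\pi(\cal{T}_2)=\pi(\omega\circ s)\to\omega(\pi(\cal{T}_1))$ which, by the universal property of $\Lambda$ together with the identity $\Lambda(\omega)=\Lambda(id_{\cal{T}_2}\circ\omega)\to\Lambda(id_{\cal{T}_2})$ used to define $\nu$, satisfies $\nu\circ s_* = id_{\pi(\cal{T}_2)}$. Thus $\nu$ is a split surjection of $\cal{T}_2$-groups and $\omega(\pi(\cal{T}_1))$ decomposes as a semidirect product $G\rtimes\pi(\cal{T}_2)$, where $G:=\ker(\nu)$. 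Concretely, $G$ represents the functor sending a commutative algebra $R$ in $\cal{T}_2$ to the group of $k$-tensor automorphisms of $\omega\otimes R$ that act as the identity on $\omega(s(X))\otimes R$ for every $X\in\cal{T}_2$; this is exactly the description promised in the introduction of the dependence of $G$ on $s$. The main technical check here is that $s_*$ is genuinely a section (i.e. the composite is the identity and not merely a conjugate of it), which follows from tracing the definition of $\nu$ through the universal property of $\Lambda$ and the compatibility $\alpha$.

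\medskip

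Next I would translate the ``$\nu$-compatible action'' condition. By Lemma \ref{nat-act}, the natural action of $\pi(\cal{T}_2)$ on $\omega(\pi(\cal{T}_1))$ is conjugation via $\nu$; combined with the splitting, an object of $\cal{T}_1$ is, under Theorem \ref{deligne}, an object $V$ of $\cal{T}_2$ with a $\omega(\pi(\cal{T}_1))=G\rtimes\pi(\cal{T}_2)$-action whose restriction to the $\pi(\cal{T}_2)$-factor (via $s_*$) equals the natural action of $\pi(\cal{T}_2)$ on $V$. Because the natural $\pi(\cal{T}_2)$-action is canonically present on \emph{every} object of $\cal{T}_2$, specifying such a compatible action is equivalent to specifying just the residual $G$-action — the $\pi(\cal{T}_2)$-part is forced and carries no extra information. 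This is the heart of why passing to the section kills the ``spurious'' part of the inner automorphism group and leaves precisely $\rm{Rep}_{\cal{T}_2}(G)$. Making this equivalence precise amounts to checking that the functor $\cal{T}_1\to\rm{Rep}_{\cal{T}_2}(G)$ sending an object to $\omega$ of it with its restricted $G$-action is fully faithful and essentially surjective, with $\omega$ corresponding to the forgetful functor; fully faithfulness follows from Theorem \ref{deligne} once one verifies that a morphism commutes with the full $\omega(\pi(\cal{T}_1))$-action iff it commutes with the $G$-action (the $\pi(\cal{T}_2)$-equivariance being automatic for morphisms in $\cal{T}_2$).

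\medskip

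\textbf{The hard part} will be the semidirect product decomposition and the verification that the compatibility condition of Theorem \ref{deligne} reduces exactly to a bare $G$-action, all carried out internally in the tensor category $\cal{T}_2$ rather than for ordinary group schemes over a field. The formation of kernels, semidirect products, and the functor-of-points description of $G$ must be justified in the setting of affine group schemes in $\cal{T}_2$ (i.e. via the Hopf algebra $\Lambda(\omega)$ and its quotients/sub-objects), where one does not have underlying sets and must argue through representing objects and the Yoneda-style universal properties recorded in \cite{D}, 8.12--8.15. I expect the cleanest route is to work throughout with the Hopf algebras: show that $s_*$ corresponds to a Hopf-algebra retraction $\Lambda(id_{\cal{T}_2})\to\Lambda(\omega)$ of the map defining $\nu$, identify the Hopf algebra of $G$ as the appropriate quotient (the coinvariants for the $\pi(\cal{T}_2)$-coaction, matching the ``associated coalgebra'' $L(\omega)$ of the previous sections), and then invoke Theorem \ref{deligne} to conclude. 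The remaining compatibilities of the equivalence with the tensor structures are then routine and parallel to the proof of Theorem \ref{main}.
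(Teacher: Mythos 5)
Your overall strategy is the same as the paper's: split the fundamental-group morphism using the section $s$, decompose $\omega(\pi(\tT_1))$ as a semidirect product, invoke Theorem \ref{deligne}, and use Lemma \ref{nat-act} to show that the compatibility condition there reduces to a bare $G$-action, so that $\tT_1\simeq \rm{Rep}_{\tT_2}(G)$ with $\omega$ the forgetful functor. The second half of your argument (that the $\pi(\tT_2)$-part of the action is forced by naturality and that the semidirect-product compatibility is automatic) is exactly what the paper does.

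However, there is a concrete error in your identification of $G$. The morphism $\nu:\pi(\tT_2)\to\omega(\pi(\tT_1))$ of (\ref{nu}) goes \emph{into} $\omega(\pi(\tT_1))$: it is induced contravariantly by the Hopf-algebra map $\Lambda(\omega)\to\Lambda(id_{\tT_2})$, so it plays the role of the split \emph{injection}, not of a split surjection, and $\ker(\nu)$ is trivial rather than the sought-for group. In particular the composite $\nu\circ s_*$ you write does not typecheck. The retraction is obtained the other way around: functoriality applied to $s$ gives $\nu_2:\pi(\tT_1)\to s(\pi(\tT_2))$, and applying $\omega$ yields $\omega(\nu_2):\omega(\pi(\tT_1))\to\omega(s(\pi(\tT_2)))\simeq\pi(\tT_2)$, whose composite with $\nu=\nu_1$ is an isomorphism by the commutative triangle involving $\alpha$. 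One must set $G=\ker(\omega(\nu_2))$; this is the group your functor-of-points description (tensor automorphisms of $\omega\otimes R$ acting trivially on every $\omega(s(X))\otimes R$) actually computes, so the intended object is right, but the definition as stated is wrong and the splitting argument needs to be rerouted accordingly. With that correction the rest of your outline goes through as in the paper.
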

Note that here we do not assume $\tT_2$ is semisimple.
\begin{proof}
Let $\nu_1:\pi(\tT_2)\to \omega(\pi(\tT_1))$ and $\nu_2: \pi(\tT_1) \to s(\pi(\tT_2))$ be the morphisms induced by $\omega$ and $s$. Then the morphism $\pi(\tT_2)\to \omega\circ s(\pi(\tT_2))$ corresponding to $\omega\circ s$ is equal to the composition $\omega(\nu_2)\circ \nu_1$ and is an isomorphism because of the commutativity of the diagram:
$$
\xymatrix{\pi(\tT_2)\ar[rr]^(.4){\omega(\nu_2)\circ \nu_1}\ar[drr]_{id}&& \omega\circ s (\pi(\tT_2))\ar[d]^{\alpha}_\simeq\\ && \pi(\tT_2)}
$$ 
So if we let the $\tT_2$-group $G=\rm{Sp}(L)$ to be the kernel of $\omega(\nu_2)$,
then $\omega(\pi(\tT_1))\simeq \pi(\tT_2)\ltimes G$ and the conjugation action of $\pi(\tT_2)$ on $G$ is the restriction of its conjugation action on $\omega(\pi(\tT_1))$, which is the natural action. So this action is also the natural action. 
Now by Theorem \ref{deligne}, $\tT_1$ is equivalent to the category of objects of $ \tT_2$ together with an action $\rho$ of $\pi(\tT_2)\ltimes G$ such that the restriction of $\rho$ to $\pi(\tT_2)$ is the natural action. 

But an arbitrary action $\rho$ of $\pi(\tT_2)\ltimes G$ on an object is given by two actions $\rho_1$ and $\rho_2$ of $\pi(\tT_2)$ and $G$, with the property that for every $g\in \pi(\tT_2)(R), h\in G(R)$ ($R$ an algebra), $\rho_2(ghg^{-1}) = \rho_1(g)\rho_2(h)\rho_1(g)^{-1}$ or equivalently $\rho_2(ghg^{-1}) \rho_1(g)= \rho_1(g)\rho_2(h).$ 
But in the case $\rho_1$ is the natural action, this condition is always satisfied, because the conjugation action of $\pi(\tT_2)$ on $G$ is equal to the natural action and $\rho_2$ respects the natural action. This shows that $\tT_2$ is equivalent to the category of representations of $G$ and $\omega$ corresponds under this equivalence to the forgetful functor $\rm{coMod}(L) = \rm{Rep}_{\tT_2}(G) \to \tT_2.$
\end{proof}

\begin{rem}
It is easy to see that for every commutative algebra $R$ in $\tT_2$, $G(R)$ is equal to the group of tensor isomorphisms of $\omega\otimes R$ which  trivially act on $\omega(s(\tT_2))$. Viewing $\tT_1$ as a $\tT_2$-module category by $s$, shows that $G(R)$ is also equal to the $\tT_2$-module endomorphisms of $\omega$ which respect the tensor structure of $\omega\otimes R$ (Every tensor endomorphism of a functor from a rigid category is an isomorophism). Thus $L=\mathcal{O}(G)$ (the ring of regular functions on $G$) represents the functor:
$$X\in \tT_2 \mapsto \Hom_{\tT_2}(\omega, \omega\otimes X),$$
and so $L$ is isomorphic to the associated coalgebra of $\omega$, defined in previous sections. So  the above corollary is very similar to the special case $S=\bf{1}$ of Theorem \ref{main}, and the deference is only on the conditions assumed on $\tT_1 = \mM$ and $\tT_2=\pP$.

In fact,  with the assumtions in the above corollary, $\omega$ is faithful (cf. \cite{D}, Corollary 2.10) and therefore $s$ is a faithful embedding. Also by exactness of $s$, $s$ has a right adjoint $t$, and so for every two objects $M_1,M_2$ in $\tT_1$ and every $X\in \tT_2$, we have:
$$\Hom_{\tT_1}(s(X)\otimes M_1,M_2)\simeq \Hom_{\tT_1}(s(X),  M_2\otimes M_1^*) \simeq \Hom_{\tT_2}(X,t( M_2\otimes M_1^*)),$$
and so $\tT_1$  is locally finite as a module category over $\tT_2$. So if we define $\mM:=\tT_1$, $\pP:=\tT_2$ and $S=\bf{1}$, we  only need two conditions for applying Theorem \ref{main}, the first is the semisimplicity of $\tT_1$ and the second is the Chevalley property. Again using Remark \ref{Chevalley}, we see that the second condition is also satisfied in the case of Tannakian categories in characteristic 0. So in this case, the above corollary for semisimple tensor categories $\tT_2$, can also be proved using Theorem \ref{main}.
\end{rem}
\begin{rem}\label{grp-sec}
 In this remark we show that the $\tT_2$-group $G$ defined in the proof of the above corollary depends on the section $s$ and it can not be determined only by $\omega$. For this consider a finite group $K$ with a subgroup $i:H \hookrightarrow K $, with two morphisms $s_1,s_2: K\to H$ such that $s_1 \circ i = s_2 \circ i = id_H$ and the  kernels $G_i = \mathrm{Ker}(s_i), \, i=1,2$ are not isomorphic. (We give an example below) 

let us define $\tT_1 = \mathrm{Rep}_k(K)$ and $\tT_2 = \mathrm{Rep}_k(H)$. Then the morphism $i:H\to K$ induces an exact tensor functor $\omega:\tT_1\to \tT_2$ and the induced functors $\tilde{s}_1, \tilde{s}_2$ corresponding to $s_1$ and $s_2$ give sections for $\omega$. Now if we forget the $\tT_2$ structures, $\pi(\tT_2)\simeq H$ and $\omega(\pi(\tT_1))\simeq K$ and so for $s=\tilde{s}_i, \, i=1,2$ the kernel of $\omega(\nu_2) =s_i $  is equal to $G_i$. So different sections $\tilde{s}_1, \tilde{s}_2$ give to nonisomorphic groups for $G$.

For an example of $K$ and $H$ satisfying above conditions consider
$$G =\langle x,y,z | x^3 = y^2 = z^2 = [x,y] = [y,z] = (xz)^2 =1 \rangle, \; H = \langle z\rangle. $$
We can define two  retractions $s_1$ and $s_2$ by
$$s_1(x)=s_2(x) = 1, \; s_1(y)=1, s_2(y)=z, \; s_3(z)=s_3(z) =z.$$
Then it can be easily verified that 
$$\mathrm{Ker}(s_1) = \langle x,y\rangle\simeq \mathbb{Z}/6\mathbb{Z}, \;\mathrm{Ker}(s_2) = \langle x, yz\rangle\simeq S_3,$$
and so $G_1\not\simeq G_2$.\footnote{The example is taken from Derek Holt's answer in  \url{http://math.stackexchange.com/a/1668652/202376}.}
\end{rem}


\begin{thebibliography}{1}
\bibitem[A]{A} D. Arapura, \emph{The Hodge theoretic fundamental group and its cohomology}, The geometry of algebraic cycles,  American Mathematical Society, 2010.

\bibitem[B]{B} F. Borceux, \emph{Handbook of categorical algebra 2, categories and structures}, Cambridge university press, 1994.

\bibitem[BGSV]{BGSV} A. Beilinson, A. Goncharov, V.  Schechtman, A.  Varchenko, \emph{Aomoto Dilogarithms, Mixed Hodge Structures and Motivic Cohomology of Pairs of Triangles on the Plane},  in The Grothendieck Festchrift, Volume I, Birkhäuser,  1990, 135-172.

\bibitem[Day]{Day} B. Day, \emph{Enriched Tannaka reconstruction}, J. Pure and Applied Algebra, 1996, 108(1), 17–22.

\bibitem[D]{D} P.~Deligne, \emph{Categories Tannakiennes}, in The Grothendieck Festchrift, Volume II, Birkhäuser,  1990, 111-195.

\bibitem[D2]{D2} P.~Deligne, \emph{Semi-simplicité de produits tensoriels en caractéristique p}, Inventiones mathematicae 197, no. 3, 2014, 587-611.

\bibitem[DM]{DM} P. Deligne, J. Milne, \emph{ Tannakian categories}, Lecture notes in Mathematics, Vol. 900, Springer, 1982.

\bibitem[E] {E} P. Etingof, S. Gelaki, D. Nikshych, V. Ostrik, \emph{Tensor categories}, American Mathematical Society, 2015.

\bibitem[G]{G} A. Goncharov, \emph{ Mixed elliptic motives}, Galois representations in Arithmetic Algebraic geometry, Cambridge University Press, 1998, 147-221. 

\bibitem[Gre]{Gre} J. Greenough, \emph{Monoidal 2-structure of bimodule categories}, J. Algebra 324, 2010, no.8, 1818-1859. 

\bibitem[M]{M} S. Mac Lane, \emph{Categories for the Working Mathematician}, Second Edition, Springer, 1998.

\bibitem[O]{O} V. Ostrik, \emph{Module categories, weak Hopf algebras and modular invariants}, Transformation Groups, Volume 8, 2003, Issue 2, pp 177-206.
\bibitem[S]{S} R. Saavedra, \emph{Categories Tannakiennes}, Lecture notes in Mathematics, Vol. 265,  Springer, 1972.

\end{thebibliography}
\end{document}